\documentclass[11pt]{article}

\usepackage[utf8]{inputenc}
\usepackage[T1]{fontenc}
\usepackage{lmodern}
\usepackage{amsmath,amssymb,amsthm,mathtools}
\usepackage{mathrsfs}
\usepackage{enumitem}
\usepackage{geometry}
\usepackage{array}
\usepackage{booktabs}
\usepackage[pagebackref]{hyperref}
\usepackage{cite}

\newtheorem{theorem}{Theorem}[section]
\newtheorem{proposition}[theorem]{Proposition}
\newtheorem{lemma}[theorem]{Lemma}
\newtheorem{corollary}[theorem]{Corollary}

\theoremstyle{definition}
\newtheorem{definition}[theorem]{Definition}

\theoremstyle{remark}
\newtheorem{remark}[theorem]{Remark}

\newcommand{\C}{\mathbb C}
\newcommand{\N}{\mathbb N}
\newcommand{\Pj}{\mathbb P}
\newcommand{\OO}{\mathcal O}
\newcommand{\ord}{\operatorname{ord}}
\newcommand{\Hol}{\operatorname{Hol}}
\newcommand{\Spec}{\operatorname{Spec}}
\newcommand{\GL}{\operatorname{GL}}
\newcommand{\PGL}{\operatorname{PGL}}

\hypersetup{colorlinks=true,linkcolor=blue,citecolor=blue,urlcolor=blue}
\title{Pfaffian Geometry and Indicial Singularities of Third-Order Linear ODEs}

\author{V\'ictor Le\'on
\thanks{ILACVN--CICN, Universidade Federal da Integração Latino-Americana,
Parque Tecnológico Itaipu, Foz do Iguaçu--PR, 85867-970, Brazil.
E-mail: \href{mailto:victor.leon@unila.edu.br}{victor.leon@unila.edu.br}}
\and Bruno Sc\'ardua
\thanks{Instituto de Matem\'atica, Universidade Federal do Rio de Janeiro,
CP 68530, Rio de Janeiro--RJ, 21945-970, Brazil.
E-mail: \href{mailto:bruno.scardua@gmail.com}{bruno.scardua@gmail.com}}
}
\date{}

\begin{document}

\maketitle

\begin{abstract}
We study third-order linear differential equations through their associated Pfaffian and projective foliations. In the regular-singular setting, we introduce a Fuchs-adapted Pfaffian model in which the indicial polynomial appears directly in the special-fiber singular scheme, and Frobenius resonance acquires a projective geometric interpretation.
\end{abstract}

\noindent\textbf{Keywords.} third-order linear ODE, Pfaffian foliation, indicial polynomial, singular scheme, regular singularity, Frobenius resonance, adjoint equation, projective Riccati foliation.

\tableofcontents
\section{Introduction and main results}
\label{sec:introduction}

The projectivization of rank-three linear systems and the resulting $\mathbb P^2$-Riccati geometry are classical; for the Riccati-foliation viewpoint see \cite{SantosScardua2010}. The Frobenius and formal-adjoint constructions are classical as well; see \cite{Ince}. The logarithmic-connection interpretation of indicial exponents as residue eigenvalues is recalled, for instance, in \cite{Szabo2008Extension,Ivanics2020}. We shall use these facts as background rather than as novelty claims. Our aim is more specific: to exhibit a Pfaffian representative that connects the companion and Fuchs-adapted pictures explicitly, and then to read indicial multiplicities and Frobenius resonance directly from the singular geometry of that representative.

The motivation comes from the order-two theory developed in \cite{LeonScardua2021}, where an integrable holomorphic $1$-form was used to relate a scalar equation to foliation theory. In order three the companion system lives on $U\times\mathbb C^3$, hence in complex dimension four, and scalar multiplication produces a radial symmetry in the three-dimensional fibers. The natural quotient therefore has fiber $\mathbb P^2$ and codimension two. The corresponding Pfaffian object is a decomposable $2$-form. Moreover, the same contraction principle used below recovers literally the normalized order-two Pfaffian $1$-form, so the construction provides a concrete continuation of the earlier picture.

The paper has three closely related aims.  We first give an explicit Pfaffian realization of the projectivized companion equation and compare it with the classical adjoint construction.  We then pass to the Fuchs-adapted lattice and identify, in companion coordinates, the scheme carried by the special projective fiber.  Finally, we translate the resonant Frobenius compatibility condition into a projective jet-extension problem.  Projectivization, residue eigenlines, monodromy, and the general eigenscheme--Jordan correspondence are used as classical background; the new point is the explicit way in which these structures meet for a scalar equation of order three.

For the monic equation
\begin{equation}
 u'''+a(w)u''+b(w)u'+c(w)u=0,
 \label{eq:intro-monic}
\end{equation}
set
\[
\mathfrak X=\partial_w+y\partial_x+z\partial_y-(az+by+cx)\partial_z,
\qquad
R=x\partial_x+y\partial_y+z\partial_z,
\]
and let \(\nu=dx\wedge dy\wedge dz\wedge dw\).  The decomposable form
\[
\Omega=i_{\mathfrak X}i_R\nu
\]
has kernel \(\langle\mathfrak X,R\rangle\) away from the zero section and therefore realizes the projectivized companion foliation after quotienting by the radial action.  In the chart \(x\ne0\), with \(s=y/x\) and \(t=z/x\), it becomes
\[
\Omega=x^3\eta_1\wedge\eta_2,
\qquad
\eta_1=ds-(t-s^2)\,dw,
\quad
\eta_2=dt+(c+bs+at+st)\,dw.
\]
The adjoint equation gives the same foliation.  If \(\varphi_1,\varphi_2,\varphi_3\) is a fundamental adjoint system and \(H_i\) are the corresponding Lagrange concomitants, then
\[
\boxed{\;i_R(dH_1\wedge dH_2\wedge dH_3)=W^*(w)\,\Omega.\;}
\]
Thus the local projective first-integral maps \([H_1:H_2:H_3]\) have constant \(\PGL(3,\C)\)-transition maps and provide the natural transverse projective structure.

The local geometry at a regular singular point is the main focus.  Write
\[
\beta=w\frac BA,\qquad \gamma=w^2\frac CA,\qquad \delta=w^3\frac DA,
\]
and introduce the Fuchs variables \(X=u\), \(Y=wu'\), \(Z=w^2u''\).  The singular gauge
\[
\Psi(w,x,y,z)=(w,x,wy,w^2z)
\]
relates the ordinary and Fuchs-adapted Pfaffian forms by
\[
\boxed{\Psi^*\widetilde\Omega=w^4\Omega.}
\]
Hence the two forms define the same foliation for \(w\ne0\), while \(\widetilde\Omega\) extends holomorphically across the singular divisor.  In the affine chart \(X\ne0\), with \(S=Y/X\) and \(T=Z/X\), the restriction to the special fiber of the ambient coefficient ideal of the saturated Fuchs-adapted representative is
\[
\boxed{\bigl(T-S(S-1),I(S)\bigr),}
\]
where
\[
I(\lambda)=\lambda(\lambda-1)(\lambda-2)
 +\beta_0\lambda(\lambda-1)+\gamma_0\lambda+\delta_0
\]
is the indicial polynomial.  Thus an indicial root \(\lambda\) of multiplicity \(m\) gives, at
\[
p_\lambda=[1:\lambda:\lambda(\lambda-1)],
\]
a local algebra isomorphic to \(\C\{\xi\}/(\xi^m)\).  This is the scalar companion realization of the general eigenscheme--Jordan correspondence, with the additional feature that the indicial polynomial itself occurs as a defining equation after elimination of the second projective coordinate.  In particular, multiple indicial roots correspond to non-reduced points of this special-fiber scheme and to degenerate tangential projective linearizations.

There is also a direct interpretation of resonance.  Normalized Frobenius solutions correspond to invariant formal sections of the projective system.  If \(\lambda_j-\lambda_i=m>0\) and the recurrence has been solved through degree \(m-1\), the classical compatibility coefficient
\[
\mathcal R_m=\sum_{r=1}^{m}Q_r(\lambda_i+m-r)a_{m-r}
\]
is exactly the obstruction to extending the chosen invariant \((m-1)\)-jet to order \(m\).

For completeness we also include a short Wronskian--Cramer argument showing that three independent formal power-series solutions force the Fuchs bounds
\[
\ord_0P\ge-1,\qquad \ord_0Q\ge-2,\qquad \ord_0R\ge-3.
\]
Together with the convergence theorem of \cite{LeonRodriguezScardua2021}, this closes the analytic question that motivated the comparison with the order-two theory; it is used here as background rather than as the principal novelty.

The paper is organized as follows. Section~\ref{sec:preliminaries} fixes the regular-singular, Pfaffian, projective, and scheme-theoretic conventions used across the two viewpoints. Section~\ref{sec:formal-fuchs} records the direct formal-basis/Fuchs argument. Section~\ref{sec:canonical-foliation} develops the Pfaffian two-form, the adjoint concomitants and the transverse $\mathbb P^2$-projective structure. Section~\ref{sec:projective} constructs the Fuchs-adapted Pfaffian transform, computes the special-fiber intersection of its saturated coefficient zero scheme in terms of the indicial polynomial, and records the resulting Riccati spectrum. Section~\ref{sec:frobenius-sections} establishes the Frobenius--invariant-section correspondence and the resonant jet obstruction. Section~\ref{sec:examples} treats the Bessel and Laguerre models, and Section~\ref{sec:holonomy} discusses projective holonomy and hyperexponential solutions.

\section{Preliminaries and conventions}
\label{sec:preliminaries}

The paper moves between two standard languages---regular-singular scalar ODEs and singular Pfaffian foliations. We collect here the conventions used throughout, so that the arguments can be read from either side without requiring a common specialized vocabulary.

\subsection{Regular-singular and Frobenius terminology}

Consider locally a monic third-order equation
\begin{equation}
 u'''+P(w)u''+Q(w)u'+R(w)u=0.
 \label{eq:prelim-monic}
\end{equation}
We say that $w=0$ is an \emph{ordinary point} when $P,Q,R$ are holomorphic at the origin. We say that $w=0$ is a \emph{regular singular point} (or a Fuchsian singularity) when
\[
 wP(w),\qquad w^2Q(w),\qquad w^3R(w)
\]
are holomorphic at the origin. Equivalently, with $\theta=w\,d/dw$ and
\[
 \beta=wP,\qquad \gamma=w^2Q,\qquad \delta=w^3R,
\]
equation~\eqref{eq:prelim-monic} becomes
\[
 [\theta(\theta-1)(\theta-2)+\beta(w)\theta(\theta-1)+\gamma(w)\theta+\delta(w)]u=0,
\]
where $\beta,\gamma,\delta$ are holomorphic at $0$. Its \emph{indicial polynomial} is
\[
 I(\lambda)=\lambda(\lambda-1)(\lambda-2)+\beta(0)\lambda(\lambda-1)+\gamma(0)\lambda+\delta(0).
\]
A \emph{formal power-series solution} belongs to $\mathbb C[[w]]$. More generally, a normalized formal Frobenius solution of exponent $\lambda$ is an expression
\[
 u=w^\lambda f(w),\qquad f(w)=1+\sum_{n\ge1}a_nw^n\in1+w\mathbb C[[w]].
\]
Here $w^\lambda$ is treated formally when necessary. Substitution first gives $I(\lambda)=0$ and then a recurrence for the coefficients $a_n$. At degree $n$, the new coefficient $a_n$ is multiplied by $I(\lambda+n)$. Hence, if $\lambda+n$ is another indicial root, the recurrence may cease to determine $a_n$ and instead impose a compatibility condition. If two indicial roots satisfy $\lambda_j-\lambda_i=m\in\mathbb N_{>0}$, we call this a \emph{resonance}. At the resonant degree the coefficient multiplying the new unknown may vanish, leaving a compatibility condition; Section~\ref{sec:frobenius-sections} identifies this condition with an invariant-jet extension obstruction. These conventions agree with the standard Frobenius terminology used below; see, for example, \cite{Ince,LeonRodriguezScardua2021}.

\subsection{Pfaffian, projective, and scheme-theoretic terminology}

Holomorphic integrable forms, holonomy, and first integrals belong to the classical local theory of singular holomorphic foliations; see, for instance, \cite{CerveauMattei,MatteiMoussu}.

Let $M$ be a complex manifold. On the open set where a decomposable holomorphic $2$-form $\Omega$ is nonzero and has kernel dimension two, the distribution
\[
 \ker\Omega=\{v\in TM:i_v\Omega=0\}
\]
is a rank-two distribution. In the situations considered here this distribution is involutive and hence, by the holomorphic Frobenius integrability theorem, defines a codimension-two holomorphic foliation on its regular locus. We verify involutivity directly rather than appeal to a general Pfaffian criterion. A local decomposable representative may be written $\Omega=\eta_1\wedge\eta_2$; multiplication by a nowhere-vanishing holomorphic function does not change the underlying foliation.

For the explicit representatives used below, \emph{saturated} (or primitive) means that their coefficient germs have no common non-unit factor. We use the \emph{coefficient zero scheme} of such a fixed representative for the analytic scheme defined by the ideal generated by its coefficients. This terminology is deliberately local: for the Fuchs-adapted extension the scheme is attached to that logarithmic representative, and we do not claim invariance under singular or meromorphic changes of logarithmic lattice. Here a logarithmic lattice means a holomorphic extension across $w=0$ in which the connection has at most a logarithmic pole; different meromorphic choices of such an extension need not have the same residue.

The radial vector field in a three-dimensional fiber is
\[
 R=x\partial_x+y\partial_y+z\partial_z.
\]
Quotienting nonzero fiber vectors by scalar multiplication replaces $\mathbb C^3\setminus\{0\}$ by $\mathbb P^2$. Thus a homogeneous linear system induces a projective, or Riccati, foliation. In an affine chart $X\ne0$ we use $S=Y/X$ and $T=Z/X$. When we say that a point of the projective fiber is singular, we mean that the induced projective vector field vanishes there. The reduced support of the coefficient zero scheme on the special fiber will be precisely this usual singular set; the scheme structure retains multiplicity information that the set alone does not record.

By a \emph{transverse $(\PGL(3,\mathbb C),\mathbb P^2)$-structure} we mean local submersions to $\mathbb P^2$, constant along the leaves, whose changes of transverse coordinates are restrictions of constant projective transformations.

We shall also use one elementary piece of scheme terminology. If $Z$ is a zero-dimensional analytic scheme and $p\in Z$, its \emph{local length} is
\[
 \ell_p(Z):=\dim_{\mathbb C}\mathcal O_{Z,p}.
\]
For example,
\[
 \ell\!\left(\mathbb C\{\xi\}/(\xi^m)\right)=m,
\]
since $1,\xi,\ldots,\xi^{m-1}$ form a complex basis.

Finally, for $A\in M_3(\mathbb C)$, its \emph{eigenscheme} in $\mathbb P^2$ is the scheme-theoretic eigenline locus
\[
 Av\wedge v=0,
\]
equivalently the scheme defined by the $2\times2$ minors of the $3\times2$ matrix with columns $Av$ and $v$. Its reduced support is the usual set of eigenlines, while its non-reduced structure can record Jordan data; see \cite{AboEklundKahlePeterson2016}. This is exactly the notion that occurs on the special fiber of the Fuchs-adapted projective system.

\section{A complete formal basis and the Fuchs condition}
\label{sec:formal-fuchs}

We first dispose of the analytic point that motivated the comparison with the order-two theory.  Consider
\begin{equation}
 u'''+P(w)u''+Q(w)u'+R(w)u=0,
 \label{eq:normalized-third}
\end{equation}
with \(P,Q,R\) meromorphic at the origin.

\begin{proposition}[Complete formal basis and convergence]\label{prop:formal-convergence}
If \eqref{eq:normalized-third} admits three linearly independent formal power-series solutions, then
\[
\ord_0P\ge-1,\qquad \ord_0Q\ge-2,\qquad \ord_0R\ge-3.
\]
Consequently an analytic third-order equation with a complete formal power-series basis is, after cancellation of a common analytic factor and normalization, ordinary or regular singular, and every formal power-series solution is convergent.
\end{proposition}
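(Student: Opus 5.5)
The plan is to recover $P,Q,R$ from the three formal solutions by Cramer's rule and to bound their orders through the Wronskian. Let $u_1,u_2,u_3\in\C[[w]]$ be linearly independent formal solutions. Put
\[
W=\det\begin{pmatrix}u_1&u_2&u_3\\ u_1'&u_2'&u_3'\\ u_1''&u_2''&u_3''\end{pmatrix}\in\C[[w]].
\]
Formal power series are closed under differentiation, so $W$ lies in $\C[[w]]$.

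The first step is to show that $W\neq0$. Linear independence over $\C$ of formal series solving a linear ODE with meromorphic coefficients implies that their Wronskian is nonzero: the Wronskian of elements of the differential field $\C((w))$ vanishes exactly when they are linearly dependent over the constants, and the constants of $\C((w))$ are $\C$. So $W=w^{k}\cdot(\text{unit})$ with $k=\ord_0W\ge0$.

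The central step is the linear system
\[
u_i'''=-Pu_i''-Qu_i'-Ru_i,\qquad i=1,2,3,
\]
solved for $(P,Q,R)$ by Cramer's rule. This gives
\[
P=-\frac{W_2}{W},\qquad Q=\frac{W_1}{W},\qquad R=-\frac{W_0}{W}.
\]
Here $W_2=W'$ is the determinant with rows $u,u',u'''$. The determinants $W_1$ and $W_0$ are obtained from $W$ by replacing the row of first, respectively zeroth, derivatives by third derivatives (the signs come from reordering rows). We do not need the signs.

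This yields $P=-W'/W$ at once, so $\ord_0P\ge-1$. The bounds for $Q$ and $R$ are the main obstacle, because the naive estimate $\ord W_j\ge0$ only gives $\ord Q,\ord R\ge -k$. To get around this, I would normalize the basis first. Applying a constant invertible change of basis does not change the orders of $P,Q,R$, and it lets us take the $u_i$ with distinct orders $n_1<n_2<n_3$ (Gaussian elimination on the lowest coefficients). Then
\[
W=c\,w^{n_1+n_2+n_3-3}(1+O(w)),\qquad c\neq0,
\]
with $c$ a nonzero Vandermonde-type constant; this is exactly $k=n_1+n_2+n_3-3$. Next, I would write $u_i^{(j)}=w^{-j}\,\theta$-type expressions: each $w^ju_i^{(j)}$ has order $\ge n_i$. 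Factoring $w^{n_i}$ from each column and the appropriate power of $w$ from each row, the row carrying $u'''$ contributes $w^{-3}$ in place of $w^{-1}$ or $w^{0}$. This gives
\[
\ord W_1\ge \textstyle\sum n_i-1-2-3=k-3+1,\qquad \ord W_0\ge \textstyle\sum n_i-1-2-3=k-3.
\]
Therefore $\ord_0Q\ge-2$ and $\ord_0R\ge-3$.

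For the "consequently" part, I would take $P,Q,R$ analytic quotients and cancel common factors. By the bounds, $wP$, $w^2Q$ and $w^3R$ are holomorphic, so the origin is ordinary or regular singular. Convergence of every formal power-series solution then follows from the classical Frobenius/Fuchs theory for regular singular points, in the form of the convergence theorem of \cite{LeonRodriguezScardua2021}: a formal solution of a Fuchsian equation converges. The only delicate point is the order bookkeeping in the central step, which the distinct-order normalization handles.
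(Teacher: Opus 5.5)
Your proposal is correct and is essentially the paper's Wronskian--Cramer argument: pass to a basis with distinct orders, get $\ord_0 W=n_1+n_2+n_3-3$ from the Vandermonde leading coefficient, and bound the Cramer numerators row by row. Your shortcut $P=-W'/W$ and the differential-field argument for $W\neq0$ are harmless (the latter is already implied by the Vandermonde computation). One bookkeeping slip: for $W_1=\det(u,u'',u''')$ the count is $\sum n_i-0-2-3=k-2$, not $\sum n_i-1-2-3$, although the value $k-2$ you go on to use is right.
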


\begin{proof}
Let \(V\subset\C[[w]]\) be the three-dimensional solution space.  Using the filtration
\(V_m=V\cap w^m\C[[w]]\), whose successive quotients have dimension at most one, choose a basis \(f_1,f_2,f_3\) with strictly increasing orders
\[
m_1<m_2<m_3,\qquad m_j=\ord_0f_j.
\]
Write \(f_j=a_jw^{m_j}+O(w^{m_j+1})\), \(a_j\ne0\), and put
\[
L_r=(f_1^{(r)},f_2^{(r)},f_3^{(r)}),
\qquad W=\det(L_0,L_1,L_2),
\qquad S=m_1+m_2+m_3.
\]
The lowest-order coefficient of \(W\) is
\[
a_1a_2a_3
\det\!\begin{pmatrix}
1&1&1\\
m_1&m_2&m_3\\
m_1(m_1-1)&m_2(m_2-1)&m_3(m_3-1)
\end{pmatrix},
\]
which is the nonzero Vandermonde \(a_1a_2a_3\prod_{i<j}(m_j-m_i)\).  Hence
\[
\ord_0W=S-3.
\]
Since each \(f_j\) satisfies \eqref{eq:normalized-third},
\[
L_3=-RL_0-QL_1-PL_2,
\]
and Cramer's rule gives
\[
P=-\frac{\det(L_0,L_1,L_3)}{W},\qquad
Q=\frac{\det(L_0,L_2,L_3)}{W},\qquad
R=-\frac{\det(L_1,L_2,L_3)}{W}.
\]
For every \(j,r\), \(\ord_0f_j^{(r)}\ge m_j-r\).  Therefore
\[
\ord_0\det(L_{r_1},L_{r_2},L_{r_3})
\ge S-(r_1+r_2+r_3),
\]
so the three numerators above have orders at least \(S-4,S-5,S-6\), respectively.  Division by \(W\) yields the stated Fuchs bounds.  An irregular singular point therefore cannot possess a complete formal power-series basis.

For an analytic equation, cancel a common factor and divide by the leading coefficient on the punctured neighborhood.  The bounds just proved make the origin ordinary or regular singular; the convergence theorem of \cite{LeonRodriguezScardua2021} then implies convergence of every formal power-series solution.  The bounds are sharp, for example for the Euler equation with solutions \(w,w^2,w^3\).
\end{proof}

We now turn to the projective geometry carried by the equation.

\section{The codimension-two Pfaffian foliation}
\label{sec:canonical-foliation}

We now return to the geometric construction that motivates the paper. Throughout this section the equation is the monic equation \eqref{eq:intro-monic}. Its companion field and radial field are
\[
\mathfrak X=\partial_w+y\partial_x+z\partial_y-(az+by+cx)\partial_z,
\qquad
R=x\partial_x+y\partial_y+z\partial_z.
\]
Since the vertical part of $\mathfrak X$ is linear in $(x,y,z)$, one has
\begin{equation}
\boxed{[\mathfrak X,R]=0.}
\label{eq:XR-commute}
\end{equation}
Let
\[
\nu=dx\wedge dy\wedge dz\wedge dw,
\qquad
\Omega=i_{\mathfrak X}i_R\nu.
\]
A direct contraction gives
\begin{equation}
\begin{aligned}
\Omega={}&x\,dy\wedge dz-y\,dx\wedge dz+z\,dx\wedge dy\\
&-(z^2+ayz+by^2+cxy)\,dx\wedge dw\\
&+(yz+axz+bxy+cx^2)\,dy\wedge dw\\
&+(xz-y^2)\,dz\wedge dw.
\end{aligned}
\label{eq:canonical-two-form}
\end{equation}

\begin{proposition}\label{prop:canonical-kernel}
Outside the zero section $\Sigma=\{x=y=z=0\}$,
\[
\boxed{\ker\Omega=\langle\mathfrak X,R\rangle.}
\]
Moreover $\operatorname{Sing}(\Omega)=\Sigma$, and $\Omega$ defines an integrable codimension-two foliation on $U\times(\mathbb C^3\setminus\{0\})$.
\end{proposition}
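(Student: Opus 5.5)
The plan is to use the general fact that for a volume form $\nu$ on a four-dimensional manifold and vector fields $V_1,V_2$, the contraction $i_{V_1}i_{V_2}\nu$ is a decomposable $2$-form. It vanishes exactly where $V_1\wedge V_2=0$, and where $V_1,V_2$ are independent its kernel is precisely $\langle V_1,V_2\rangle$.

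\textbf{Kernel.} For $v$ in the span, $i_v\Omega=0$ by antisymmetry: $i_{\mathfrak X}i_{\mathfrak X}=0$ and $i_Ri_R=0$. For the converse, complete $\mathfrak X,R$ to a local frame $\mathfrak X,R,e_3,e_4$ with dual coframe $\theta^1,\dots,\theta^4$. Then $\nu=g\,\theta^1\wedge\theta^2\wedge\theta^3\wedge\theta^4$ with $g\neq0$, so $\Omega=\pm g\,\theta^3\wedge\theta^4$. This is nonzero and decomposable, and its kernel is $\{\theta^3=\theta^4=0\}=\langle\mathfrak X,R\rangle$. So everything reduces to showing that $\mathfrak X$ and $R$ are pointwise independent exactly off $\Sigma$.

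\textbf{Independence and singular set.} $\mathfrak X$ has $\partial_w$-component $1$ while $R$ has none. Hence $\mathfrak X\wedge R=0$ at a point iff $R=0$ there, i.e.\ iff $x=y=z=0$. Off $\Sigma$ the fields are independent, so $\Omega\neq0$ with the stated kernel. On $\Sigma$ every coefficient in \eqref{eq:canonical-two-form} is visibly a polynomial without constant term in $(x,y,z)$, so $\Omega$ vanishes identically there. This gives $\operatorname{Sing}(\Omega)=\Sigma$. As a cross-check, the $dy\wedge dz$, $dx\wedge dz$, $dx\wedge dy$ coefficients $x,-y,z$ alone already show nonvanishing off $\Sigma$.

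\textbf{Integrability.} By \eqref{eq:XR-commute}, $[\mathfrak X,R]=0\in\langle\mathfrak X,R\rangle$. The rank-two distribution $\ker\Omega$ on $U\times(\C^3\setminus\{0\})$ is therefore involutive, and the holomorphic Frobenius theorem gives the codimension-two foliation. Its leaves are the orbits of the commuting local flows of $\mathfrak X$ and $R$.

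The only point needing care is the converse kernel inclusion. I would avoid checking the linear system $i_v\Omega=0$ coefficient by coefficient and instead rely on the coframe argument above, which is routine. I do not expect a genuine obstacle.
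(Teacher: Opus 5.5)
Your proof is correct and follows the paper's route: antisymmetry of contraction gives $\langle\mathfrak X,R\rangle\subset\ker\Omega$, the $\partial_w$-component gives independence off $\Sigma$, $\Omega$ vanishes on $\Sigma$, and $[\mathfrak X,R]=0$ gives involutivity. Your adapted-coframe computation $\Omega=\pm g\,\theta^3\wedge\theta^4$ simply spells out the paper's one-line appeal to the fact that contraction with $\nu$ identifies the nonzero bivector $R\wedge\mathfrak X$ with a nonzero decomposable $2$-form whose kernel is exactly $\langle\mathfrak X,R\rangle$.
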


\begin{proof}
By definition,
\[
i_{\mathfrak X}\Omega=i_{\mathfrak X}i_{\mathfrak X}i_R\nu=0,
\qquad
i_R\Omega=i_Ri_{\mathfrak X}i_R\nu=0.
\]
Thus $\mathfrak X,R\in\ker\Omega$. If $(x,y,z)\ne0$, then $R\ne0$, while $\mathfrak X$ and $R$ are linearly independent because the $\partial_w$-component of $\mathfrak X$ is $1$ and $R$ is vertical. Contraction with the volume form identifies the nonzero bivector $R\wedge\mathfrak X$ with the nonzero decomposable two-form $\Omega$; hence its kernel is exactly $\langle\mathfrak X,R\rangle$. At the zero section $R=0$, so $\Omega=0$. Finally, \eqref{eq:XR-commute} shows that the kernel distribution is involutive.
\end{proof}

\begin{proposition}[Covariance under linear gauges]\label{prop:gauge-covariance}
Let $G:U\to\GL(3,\mathbb C)$ be holomorphic and let
\[
\Phi(w,v)=(w,G(w)v)
\]
be the induced fiberwise linear change of variables. If $\mathfrak X_G=\Phi_*\mathfrak X$, $R_G=\Phi_*R$ and $\nu_G$ is the standard volume form in the new fiber coordinates, then
\[
\boxed{
\Phi^*\bigl(i_{\mathfrak X_G}i_{R_G}\nu_G\bigr)
=(\det G)\,\Omega.}
\]
Hence the Pfaffian representative changes by a nowhere-vanishing factor, and the codimension-two foliation $\ker\Omega$ is intrinsic under holomorphic linear changes of the companion trivialization.
\end{proposition}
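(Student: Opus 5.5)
The plan is to use naturality of interior products under diffeomorphisms, together with the way a linear map transforms volume forms. Since $\Phi$ is a biholomorphism of $U\times\mathbb C^3$ onto itself (as $G(w)$ is invertible), pullback commutes with contraction in the following sense: for any vector fields $V_1,V_2$ and form $\alpha$ one has
\[
\Phi^*\bigl(i_{\Phi_*V_1}i_{\Phi_*V_2}\alpha\bigr)=i_{V_1}i_{V_2}\Phi^*\alpha .
\]
Applying this with $V_1=\mathfrak X$, $V_2=R$, $\alpha=\nu_G$ reduces the statement to computing $\Phi^*\nu_G$. Here $\nu_G=dx'\wedge dy'\wedge dz'\wedge dw$ in the new coordinates $(x',y',z')^T=G(w)(x,y,z)^T$.

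Next I compute $\Phi^*\nu_G$. Each new fiber coordinate pulls back to $v'_i=\sum_j G_{ij}(w)v_j$, so
\[
dv'_i=\sum_j G_{ij}(w)\,dv_j+\Bigl(\sum_j G_{ij}'(w)v_j\Bigr)dw .
\]
In the wedge product with the final factor $dw$, every $dw$-term is annihilated. What remains is $\det G(w)\,dx\wedge dy\wedge dz\wedge dw$, so
\[
\Phi^*\nu_G=(\det G)\,\nu .
\]
Since contraction is $\mathcal O$-linear in the form argument, $i_{\mathfrak X}i_R\bigl((\det G)\nu\bigr)=(\det G)\,\Omega$, which is the boxed identity.

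For the consequence, $\det G$ is nowhere vanishing on $U$, so $\Phi^*\Omega_G$ and $\Omega$ have the same kernel. It remains to note that $\ker\Omega_G=\langle\mathfrak X_G,R_G\rangle$ away from the zero section. Two observations give this. First, $R_G=R'$ is the radial field in the new coordinates, because $\Phi$ is fiberwise linear and hence commutes with scalar dilations. Second, $\mathfrak X_G$ is again a field of the form $\partial_w+(\text{linear vertical part})$, namely $\partial_w+(G'G^{-1}+GAG^{-1})v'$. The argument of Proposition~\ref{prop:canonical-kernel} therefore applies verbatim. Consequently the foliation defined by $\Omega_G$ is the $\Phi$-image of the one defined by $\Omega$, which is the claimed intrinsic character.

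The main obstacle is only bookkeeping. First, one must justify the naturality identity for interior products; this is standard, since $(\Phi^*\alpha)(V,\dots)=\alpha(\Phi_*V,\dots)\circ\Phi$. Second, one must handle the $w$-dependence of $G$ correctly in the volume computation, and the key point there is that $dw$ appears as the last factor of $\nu_G$.
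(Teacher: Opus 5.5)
Your proposal is correct and follows the paper's proof. Naturality of interior multiplication under the biholomorphism $\Phi$ reduces the claim to $\Phi^*\nu_G=(\det G)\nu$, and you compute this as the paper does, by noting that the $dw$-terms are killed by the final $dw$. Your extra check that $\mathfrak X_G$ and $R_G$ keep the companion and radial form is harmless but unnecessary, since $\ker\Omega_G=\Phi_*\ker\Omega$ already follows from the boxed identity.
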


\begin{proof}
The radial field is natural under fiberwise linear maps, so $R_G=\Phi_*R$, while by definition $\mathfrak X_G=\Phi_*\mathfrak X$. Moreover
\[
\Phi^*\nu_G=(\det G)\nu;
\]
the terms containing $dw$ in the differentials of $G(w)v$ disappear after wedging with $dw$. Naturality of interior multiplication therefore gives
\[
\Phi^*\bigl(i_{\mathfrak X_G}i_{R_G}\nu_G\bigr)
=i_{\mathfrak X}i_R\Phi^*\nu_G
=(\det G)\Omega.
\]
\end{proof}

\subsection{Radial quotient and the projective Riccati system}

On the chart $x\ne0$ set
\[
s=\frac yx,\qquad t=\frac zx.
\]
Along the companion field,
\[
s'=t-s^2,
\qquad
t'=-c-bs-at-st.
\]
Thus define
\begin{equation}
\eta_1=ds-(t-s^2)\,dw,
\qquad
\eta_2=dt+(c+bs+at+st)\,dw.
\label{eq:ordinary-pfaff}
\end{equation}

\begin{proposition}\label{prop:omega-pfaff}
On $x\ne0$ one has
\[
\boxed{\Omega=x^3\eta_1\wedge\eta_2.}
\]
In particular, the radial quotient of $\ker\Omega$ is the projectivized companion foliation on $U\times\mathbb P^2$.
\end{proposition}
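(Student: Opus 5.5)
The plan is a change of coordinates followed by a direct comparison of two decomposable $2$-forms. On $x\ne0$ we use $(w,x,s,t)$ with $y=sx$ and $z=tx$. The key point is that $\Omega=i_{\mathfrak X}i_R\nu$ is intrinsic, so it may be computed in any coordinates provided the vector fields and the volume form are rewritten accordingly.

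First, express the ingredients in the new coordinates.
\begin{itemize}
\item Since $dy=s\,dx+x\,ds$ and $dz=t\,dx+x\,dt$, we get $dx\wedge dy\wedge dz=x^2\,dx\wedge ds\wedge dt$. Hence $\nu=x^2\,dx\wedge ds\wedge dt\wedge dw$.
\item The radial field $R$ annihilates the degree-zero functions $s$ and $t$ and satisfies $R(x)=x$. Therefore $R=x\partial_x$ in the new chart.
\item For $\mathfrak X$ we have $\mathfrak X(w)=1$ and $\mathfrak X(x)=y=sx$. The quotient rule gives $\mathfrak X(s)=(z x-y^2)/x^2=t-s^2$ and $\mathfrak X(t)=(-(az+by+cx)x-zy)/x^2=-c-bs-at-st$.
\item Combining these, $\mathfrak X=\partial_w+sx\,\partial_x+(t-s^2)\partial_s-(c+bs+at+st)\partial_t$.
\end{itemize}

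Second, contract. We have $i_R\nu=x^3\,ds\wedge dt\wedge dw$. The $\partial_x$-component of $\mathfrak X$ plays no role, since this form contains no $dx$. Contracting with $\mathfrak X$ then gives
\[
x^3\bigl[(t-s^2)\,dt\wedge dw+(c+bs+at+st)\,ds\wedge dw+ds\wedge dt\bigr].
\]
The sign of the middle term comes from $-(c+\dots)\cdot i_{\partial_t}(ds\wedge dt\wedge dw)=-(c+\dots)(-ds\wedge dw)$.

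Third, expand the product $\eta_1\wedge\eta_2$ with the abbreviations $p=t-s^2$ and $q=c+bs+at+st$:
\[
(ds-p\,dw)\wedge(dt+q\,dw)=ds\wedge dt+q\,ds\wedge dw+p\,dt\wedge dw,
\]
using $-p\,dw\wedge dt=p\,dt\wedge dw$. This matches the bracket above term by term, which yields $\Omega=x^3\eta_1\wedge\eta_2$.

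For the final claim, note that $\eta_1$ and $\eta_2$ involve only $(w,s,t)$. Their common kernel in the quotient chart $U\times\{[1:s:t]\}$ is spanned by $\partial_w+(t-s^2)\partial_s-(c+bs+at+st)\partial_t$, which is the projection of $\mathfrak X$. Together with Proposition~\ref{prop:canonical-kernel}, this shows that $\ker\Omega$ is the pullback under the radial projection of the line field of the projectivized companion system, which is therefore the radial quotient. The other charts $y\ne0$ and $z\ne0$ follow by symmetry, or by noting that the foliation is determined on the dense set $x\ne0$ together with its analytic closure; either way the quotient is the foliation on $U\times\mathbb P^2$.

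The only real obstacle is sign bookkeeping in the contractions. Working in the adapted chart, where $R=x\partial_x$, avoids expanding \eqref{eq:canonical-two-form} directly and keeps this bookkeeping manageable.
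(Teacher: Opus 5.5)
Your proof is correct. It follows a slightly different route from the paper's.

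\textbf{How the routes differ.} The paper substitutes $y=sx$, $z=tx$, $dy=s\,dx+x\,ds$, $dz=t\,dx+x\,dt$ directly into the expanded coordinate formula \eqref{eq:canonical-two-form}. It then has to verify that every term containing $dx$ cancels: the $dx\wedge ds$ and $dx\wedge dt$ terms from the first three summands, and the $dx\wedge dw$ terms from the last three. You instead use naturality of contraction. You rewrite $\nu$, $R$ and $\mathfrak X$ in the chart $(w,x,s,t)$ before contracting, so that $R=x\partial_x$ and $i_R\nu=x^3\,ds\wedge dt\wedge dw$ contains no $dx$ from the start.

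\textbf{What each buys.} In your version the disappearance of the radial differential is automatic rather than a cancellation to be checked. The factor $x^3$ is also visibly the Jacobian $x^2$ times the coefficient $x$ of $R$. This is the same naturality-of-contraction mechanism the paper itself uses in Propositions~\ref{prop:gauge-covariance} and~\ref{prop:fuchs-transform}. The paper's route, by contrast, needs nothing beyond the displayed formula and elementary substitution.

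\textbf{The quotient statement.} Your description of $\ker\Omega$ as the pullback of the line field $\ker\eta_1\cap\ker\eta_2$ on $(w,s,t)$ is sound and more explicit than the paper's one-line remark. The only loose phrase is ``by symmetry'' for the charts $y\ne0$ and $z\ne0$: $\mathfrak X$ is not symmetric in $x,y,z$. Your alternative continuity argument covers those charts. So does citing Proposition~\ref{prop:canonical-kernel} directly, since $\ker\Omega=\langle\mathfrak X,R\rangle$ off the zero section and $\mathfrak X$ descends to $U\times\mathbb P^2$ because $[\mathfrak X,R]=0$.
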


\begin{proof}
Substitute $y=sx$, $z=tx$ into \eqref{eq:canonical-two-form}, with
\[
dy=s\,dx+x\,ds,
\qquad
dz=t\,dx+x\,dt.
\]
After cancellation of the terms containing the radial differential $dx$, the remaining expression is exactly $x^3\eta_1\wedge\eta_2$. Since $s,t,w$ are invariant under the radial action, the Pfaffian system $\langle\eta_1,\eta_2\rangle$ is the quotient system.
\end{proof}

\begin{remark}[Recovery of the order-two form]\label{rem:order-two-recovery}
The contraction construction literally recovers the integrable $1$-form used in the second-order theory. Indeed, for
\[
u''+p(w)u'+q(w)u=0
\]
set
\[
\mathfrak X_2=\partial_w+y\partial_x-(py+qx)\partial_y,
\qquad
R_2=x\partial_x+y\partial_y,
\qquad
\nu_2=dx\wedge dy\wedge dw.
\]
Then
\[
-i_{\mathfrak X_2}i_{R_2}\nu_2
=-y\,dx+x\,dy+(y^2+pxy+qx^2)\,dw,
\]
which is the normalized order-two Pfaffian form of \cite{LeonScardua2021}. Thus the present two-form is not merely analogous to the order-two construction: it is its degree-two counterpart under the same contraction principle.
\end{remark}

For completeness, the Frobenius condition can also be seen directly:
\[
d\eta_1=-\eta_2\wedge dw+2s\,\eta_1\wedge dw,
\]
\[
d\eta_2=(b+t)\eta_1\wedge dw+(a+s)\eta_2\wedge dw.
\]
Hence $d\eta_i$ belongs to the differential ideal generated by $\eta_1,\eta_2$.

\subsection{The adjoint equation and Lagrange concomitants}

The following is the classical Lagrange-adjoint construction; see, for example, \cite{Ince}. Conceptually, the formal adjoint $L^*$ is characterized by a Lagrange identity of the form
\[
\varphi L[u]-uL^*[\varphi]=\frac{d}{dw}\,\mathcal B(u,\varphi),
\]
for a bilinear concomitant $\mathcal B$. Thus, when $L[u]=0$ and $L^*[\varphi]=0$, the concomitant is constant along solutions. For $L=D^3+aD^2+bD+c$, after multiplying the adjoint equation by $-1$, one obtains
\begin{equation}
\varphi'''-a\varphi''+(b-2a')\varphi'+(b'-a''-c)\varphi=0.
\label{eq:adjoint-third}
\end{equation}
For a solution $\varphi$ define its Lagrange concomitant in companion coordinates by
\begin{equation}
H_\varphi=
\bigl(\varphi''-a\varphi'+(b-a')\varphi\bigr)x
+(a\varphi-\varphi')y+\varphi z.
\label{eq:Hphi}
\end{equation}

\begin{lemma}\label{lem:H-first-integral}
If $\varphi$ solves \eqref{eq:adjoint-third}, then
\[
\mathfrak X(H_\varphi)=0,
\qquad
R(H_\varphi)=H_\varphi.
\]
\end{lemma}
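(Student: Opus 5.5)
The second identity is immediate, so the plan is to dispose of it first and spend the effort on the first. Since $H_\varphi$ is linear homogeneous in $(x,y,z)$ with coefficients depending only on $w$, Euler's relation gives $R(H_\varphi)=H_\varphi$. For the first identity, write
\[
H_\varphi=\alpha(w)x+\beta(w)y+\gamma(w)z,
\qquad
\alpha=\varphi''-a\varphi'+(b-a')\varphi,\quad
\beta=a\varphi-\varphi',\quad
\gamma=\varphi .
\]
Applying $\mathfrak X=\partial_w+y\partial_x+z\partial_y-(az+by+cx)\partial_z$ gives
\[
\mathfrak X(H_\varphi)=\alpha'x+\beta'y+\gamma'z+\alpha y+\beta z-\gamma(az+by+cx).
\]
I would then collect the coefficients of $x$, $y$ and $z$ separately and show that each one vanishes.

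Here are the three coefficients in turn.

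\emph{Coefficient of $z$:} it equals $\gamma'+\beta-a\gamma=\varphi'+a\varphi-\varphi'-a\varphi=0$, identically.

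\emph{Coefficient of $y$:} it equals $\beta'+\alpha-b\gamma$. Expanding, this is
\[
a'\varphi+a\varphi'-\varphi''+\varphi''-a\varphi'+(b-a')\varphi-b\varphi=0,
\]
again identically.

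\emph{Coefficient of $x$:} it equals $\alpha'-c\varphi$. Differentiating $\alpha$ gives
\[
\alpha'=\varphi'''-a\varphi''+(b-2a')\varphi'+(b'-a'')\varphi.
\]
Hence $\alpha'-c\varphi$ is exactly the left-hand side of the adjoint equation \eqref{eq:adjoint-third}, so it vanishes precisely because $\varphi$ solves that equation.

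The only real point is the $x$-coefficient. It is the one place where the adjoint equation enters, so the main check is that the sign conventions used after multiplying the adjoint by $-1$ match the definition \eqref{eq:Hphi}. The expansion above confirms that they do. The $y$- and $z$-coefficients vanish by construction, independently of the equation satisfied by $\varphi$. Conceptually, this reflects that \eqref{eq:Hphi} is the Lagrange concomitant $\mathcal B(u,\varphi)$ evaluated on the companion jet $(x,y,z)=(u,u',u'')$. So the computation is just the Lagrange identity with $L^*[\varphi]=0$, read along the flow of $\mathfrak X$.
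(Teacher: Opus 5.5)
Your proposal is correct and matches the paper's proof: the paper also writes $H_\varphi=A_\varphi x+B_\varphi y+\varphi z$, notes that the $y$- and $z$-coefficients of $\mathfrak X(H_\varphi)$ vanish identically, identifies the $x$-coefficient $A_\varphi'-c\varphi$ with the left-hand side of \eqref{eq:adjoint-third}, and obtains $R(H_\varphi)=H_\varphi$ from fiberwise degree-one homogeneity. Your explicit expansions, including $\alpha'=\varphi'''-a\varphi''+(b-2a')\varphi'+(b'-a'')\varphi$, are correct.
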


\begin{proof}
Write $H_\varphi=A_\varphi x+B_\varphi y+\varphi z$, where
\[
A_\varphi=\varphi''-a\varphi'+(b-a')\varphi,
\qquad
B_\varphi=a\varphi-\varphi'.
\]
Then
\[
\mathfrak X(H_\varphi)=
(A_\varphi'-c\varphi)x+(A_\varphi+B_\varphi'-b\varphi)y
 +(B_\varphi+\varphi'-a\varphi)z.
\]
The last two coefficients vanish identically, while the first is the left-hand side of \eqref{eq:adjoint-third}. The equality $R(H_\varphi)=H_\varphi$ follows from fiberwise homogeneity of degree one.
\end{proof}

On a simply connected open set $V\subset U$, choose a fundamental system $\varphi_1,\varphi_2,\varphi_3$ of \eqref{eq:adjoint-third}, put $H_i=H_{\varphi_i}$, and define
\[
W^*(w)=\det
\begin{pmatrix}
\varphi_1&\varphi_1'&\varphi_1''\\
\varphi_2&\varphi_2'&\varphi_2''\\
\varphi_3&\varphi_3'&\varphi_3''
\end{pmatrix}.
\]

\begin{theorem}[Adjoint--Pfaffian identity]\label{thm:adjoint-pfaffian-full}
One has
\begin{equation}
\boxed{
i_R(dH_1\wedge dH_2\wedge dH_3)=W^*(w)\,\Omega.}
\label{eq:adjoint-pfaffian}
\end{equation}
Hence the two constructions define the same foliation wherever the adjoint system is fundamental.
\end{theorem}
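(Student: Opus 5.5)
We will prove \eqref{eq:adjoint-pfaffian} by showing that both sides are decomposable $2$-forms with the same kernel, and then comparing a single coefficient. Put $\Theta=dH_1\wedge dH_2\wedge dH_3$. Each $H_i$ is linear in $(x,y,z)$ with coefficients depending on $w$:
\[
H_i=A_{\varphi_i}x+B_{\varphi_i}y+\varphi_i z .
\]
Let $M(w)$ be the $3\times3$ matrix with rows $(A_{\varphi_i},B_{\varphi_i},\varphi_i)$. Then
\[
dH_i=M_{i\cdot}\,d(x,y,z)^T+(\partial_wH_i)\,dw,
\]
so $\Theta=\det M\,dx\wedge dy\wedge dz+(\text{terms containing }dw)$.

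\textbf{Step 1: the kernel.} By Lemma~\ref{lem:H-first-integral}, $i_{\mathfrak X}dH_i=0$, so $i_{\mathfrak X}\Theta=0$. Since $i_R$ and $i_{\mathfrak X}$ anticommute, $i_{\mathfrak X}i_R\Theta=-i_Ri_{\mathfrak X}\Theta=0$. Clearly $i_Ri_R\Theta=0$. Hence $i_R\Theta$ is a $2$-form annihilated by both $\mathfrak X$ and $R$.

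On $\mathbb C^4$, the $2$-forms annihilated by two pointwise independent vectors form a one-dimensional space, spanned by $i_{\mathfrak X}i_R\nu=\Omega$. Off the zero section, Proposition~\ref{prop:canonical-kernel} gives that $\mathfrak X$ and $R$ are independent. So $i_R\Theta=f\,\Omega$ for a holomorphic function $f$ there. By Hartogs (codimension three), or simply by continuity of the coefficients, the identity extends across $\Sigma$.

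\textbf{Step 2: computing $f$.} We compare the $dy\wedge dz$ coefficient. In $\Omega$ this coefficient is $x$, by \eqref{eq:canonical-two-form}. In $i_R\Theta$, only the $dx\wedge dy\wedge dz$ part contributes to $dy\wedge dz$, because $R$ has no $\partial_w$ component. That part contributes
\[
\det M\cdot i_R(dx\wedge dy\wedge dz),
\]
whose $dy\wedge dz$ coefficient is $x\det M$. Hence $f=\det M$.

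\textbf{Step 3: identifying $\det M$ with $W^*$.} Column operations on $M$ reduce it:
\begin{itemize}
\item The $B$-column $a\varphi_i-\varphi_i'$ becomes $-\varphi_i'$ after subtracting $a$ times the $\varphi$-column.
\item The $A$-column $\varphi_i''-a\varphi_i'+(b-a')\varphi_i$ becomes $\varphi_i''$ after subtracting $(b-a')$ times the $\varphi$-column and $a$ times the reduced $B$-column. (The reduced $B$-column is $-\varphi_i'$, so subtracting $a$ times it adds $a\varphi_i'$, which cancels $-a\varphi_i'$.)
\end{itemize}
We are left with columns $(\varphi_i'',-\varphi_i',\varphi_i)$. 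Reversing the column order is one transposition and gives columns $(\varphi_i,-\varphi_i',\varphi_i'')$, contributing a sign $-1$. Pulling the sign out of the middle column contributes another $-1$. Therefore
\[
\det M=\det(\varphi_i,\varphi_i',\varphi_i'')=W^*(w).
\]

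\textbf{Main obstacle.} The delicate point is the sign bookkeeping in Step~3, together with the orientation convention $\nu=dx\wedge dy\wedge dz\wedge dw$ and the order of contractions $i_{\mathfrak X}i_R$ used in Step~2. I would double-check both by a direct test case: take $a=b=c=0$, so the adjoint equation is $\varphi'''=0$, and choose $\varphi_1=1$, $\varphi_2=w$, $\varphi_3=w^2/2$. Then $W^*=1$, and both sides can be computed explicitly.

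The final assertion of the theorem follows because $W^*$ is nonvanishing wherever $\varphi_1,\varphi_2,\varphi_3$ is a fundamental system.
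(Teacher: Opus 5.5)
Your proof is correct and follows essentially the paper's route: you use $\mathfrak X(H_i)=0$ together with a pointwise one-dimensionality argument, fix the proportionality factor from a single coefficient, and identify $\det M=W^*$ (your column operations are equivalent to the paper's factorization of $M$ through a determinant-one matrix). The only difference is that the paper applies the one-dimensionality to $3$-forms annihilated by $\mathfrak X$, which gives $\Theta=-W^*\,i_{\mathfrak X}\nu$ everywhere because $\mathfrak X$ never vanishes, and then contracts with $R$; this avoids your extension across $\Sigma$ and the density step needed to pass from $xf=x\det M$ to $f=\det M$.
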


\begin{proof}
Write
\[
H_i=A_ix+B_iy+C_iz,
\]
with
\[
A_i=\varphi_i''-a\varphi_i'+(b-a')\varphi_i,
\quad B_i=a\varphi_i-\varphi_i',
\quad C_i=\varphi_i.
\]
The coefficient matrix $M=(A_i,B_i,C_i)$ factors as
\[
M=
\begin{pmatrix}
\varphi_1&\varphi_1'&\varphi_1''\\
\varphi_2&\varphi_2'&\varphi_2''\\
\varphi_3&\varphi_3'&\varphi_3''
\end{pmatrix}
\begin{pmatrix}
b-a'&a&1\\
-a&-1&0\\
1&0&0
\end{pmatrix}.
\]
The second matrix has determinant $1$, so $\det M=W^*$.

Set $\Theta=dH_1\wedge dH_2\wedge dH_3$. By Lemma~\ref{lem:H-first-integral}, $i_{\mathfrak X}\Theta=0$. Since $\mathfrak X$ is nowhere zero, choose locally a frame whose first vector is $\mathfrak X$. In complex dimension four, a direct basis computation then shows that the space of $3$-forms annihilated by $i_{\mathfrak X}$ is one-dimensional, generated by $i_{\mathfrak X}\nu$. Hence there is a function $f$ such that $\Theta=f\,i_{\mathfrak X}\nu$. Evaluating on $(\partial_x,\partial_y,\partial_z)$ gives
\[
\Theta(\partial_x,\partial_y,\partial_z)=W^*,
\]
whereas
\[
(i_{\mathfrak X}\nu)(\partial_x,\partial_y,\partial_z)
=\nu(\mathfrak X,\partial_x,\partial_y,\partial_z)=-1.
\]
Thus $\Theta=-W^*i_{\mathfrak X}\nu$. Contracting with $R$ and using $i_Ri_{\mathfrak X}=-i_{\mathfrak X}i_R$ yields
\[
i_R\Theta=W^*i_{\mathfrak X}i_R\nu=W^*\Omega.
\]
\end{proof}

\begin{corollary}[Transverse $\mathbb P^2$-projective structure]\label{cor:transverse-P2}
On every simply connected open subset on which a fundamental adjoint system is chosen, the map
\[
\mathcal D(w,[x:y:z])=[H_1:H_2:H_3]
\]
is a holomorphic submersion $U\times\mathbb P^2\to\mathbb P^2$ constant along the leaves of the projectivized companion foliation. Two such maps differ on connected overlaps by a constant element of $\PGL(3,\mathbb C)$.
\end{corollary}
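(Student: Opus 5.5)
The plan has three parts: establish that $\mathcal D$ is well defined, holomorphic and a submersion; show it is constant along leaves; and identify the transition maps.

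\emph{Well-definedness and holomorphy.} By construction $H=(H_1,H_2,H_3)=M(w)\,(x,y,z)^{T}$, where $M$ is the coefficient matrix from the proof of Theorem~\ref{thm:adjoint-pfaffian-full}. That proof shows $\det M=W^*(w)$, and $W^*$ is nonvanishing on $V$ because the adjoint system is fundamental. Hence $H$ is a fiberwise linear isomorphism $\C^3\to\C^3$ depending holomorphically on $w$. It sends nonzero vectors to nonzero vectors and is homogeneous of degree one; the latter is Lemma~\ref{lem:H-first-integral}, $R(H_i)=H_i$. It therefore descends to a holomorphic map
\[
\mathcal D(w,[v])=[M(w)v]
\]
on $V\times\Pj^2$.

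\emph{Submersion.} Restricted to each fiber $\{w\}\times\Pj^2$, $\mathcal D$ is the projective transformation $[M(w)]$, which is a biholomorphism of $\Pj^2$. So the differential of $\mathcal D$ is already surjective on vertical tangent vectors, and $\mathcal D$ is a submersion.

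\emph{Constancy along leaves.} By Lemma~\ref{lem:H-first-integral}, $\mathfrak X(H_i)=0$, so $H$ is constant along integral curves of $\mathfrak X$. By Proposition~\ref{prop:canonical-kernel}, the leaves of $\ker\Omega$ are swept by the flows of $\mathfrak X$ and $R$. Along the flow of $R$, $H$ only rescales. Hence $[H_1:H_2:H_3]$ is constant on the leaves of $\ker\Omega$, and so on their images in $V\times\Pj^2$. By Proposition~\ref{prop:omega-pfaff}, these images are the leaves of the projectivized companion foliation.

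A dimension check confirms the leaves are exactly the fibers of $\mathcal D$. The leaves have dimension one and the fibers of the submersion have dimension $3-2=1$. Alternatively, Theorem~\ref{thm:adjoint-pfaffian-full} gives $i_R(dH_1\wedge dH_2\wedge dH_3)=W^*\Omega$ with $W^*\neq0$, which identifies the kernels.

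\emph{Transitions.} Let $\varphi_i$ and $\tilde\varphi_i$ be two fundamental adjoint systems on a connected overlap. Then $\tilde\varphi_i=\sum_j g_{ij}\varphi_j$ for some constant $g=(g_{ij})\in\GL(3,\C)$, since the solution space of \eqref{eq:adjoint-third} is three-dimensional and both families are bases of it. The map $\varphi\mapsto H_\varphi$ in \eqref{eq:Hphi} is linear in $\varphi$, so $\tilde H_i=\sum_j g_{ij}H_j$. Therefore
\[
\tilde{\mathcal D}=[g]\circ\mathcal D, \qquad [g]\in\PGL(3,\C).
\]
The only point needing care is that on a non-simply connected overlap the systems are still related by a single constant matrix. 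This holds on each connected component, since the solution space of a linear ODE on a connected set is a vector space of constant dimension. Beyond that, no step is a serious obstacle: the essential input, $\det M=W^*\neq0$, was already obtained in the proof of Theorem~\ref{thm:adjoint-pfaffian-full}.
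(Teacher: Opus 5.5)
Your proposal is correct and follows essentially the same route as the paper. You write $(H_1,H_2,H_3)^t=M(w)(x,y,z)^t$ with $\det M=W^*\neq 0$, use Lemma~\ref{lem:H-first-integral} for constancy along trajectories and homogeneity for descent to $\mathbb P^2$, and note that two fundamental adjoint bases differ by a constant matrix whose class gives the $\PGL(3,\mathbb C)$ transition. Your explicit fiberwise-biholomorphism argument for the submersion property, and the identification of the leaves with the fibers of $\mathcal D$, spell out what the paper leaves implicit.
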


\begin{proof}
For fixed $w$, the vector $(H_1,H_2,H_3)^t$ equals $M(w)(x,y,z)^t$, and $\det M=W^*\ne0$; the nonvanishing follows also from Abel's identity for a fundamental system. Hence the induced fiber map is projective linear and invertible. Lemma~\ref{lem:H-first-integral} shows that it is constant along the companion trajectories, while homogeneity makes it well defined on projective fibers. If a second fundamental adjoint basis is used, it differs from the first by a constant matrix in $\GL(3,\mathbb C)$, and the corresponding projective maps differ by its class in $\PGL(3,\mathbb C)$.
\end{proof}

On $H_3\ne0$ the two affine first integrals are
\[
F_1=\frac{H_1}{H_3},\qquad F_2=\frac{H_2}{H_3},
\]
and a direct expansion gives
\[
\boxed{
i_R(dH_1\wedge dH_2\wedge dH_3)
=H_3^3\,dF_1\wedge dF_2.}
\]
This makes explicit why order three naturally produces two independent projective first integrals rather than a single codimension-one first integral.

\section{The Fuchs-adapted projective Riccati system}
\label{sec:projective}

Projectivizations of rank-three linear systems fit the classical framework of Riccati foliations with projective fiber; compare \cite{SantosScardua2010}. The projectivization itself is not the point here. We keep track instead of the particular logarithmic extension selected by the Fuchs lattice, because this extension carries a saturated Pfaffian coefficient zero scheme whose special-fiber intersection will encode the indicial multiplicities.

Assume that $w=0$ is a regular singular point of an equation
\[A(w)u'''+B(w)u''+C(w)u'+D(w)u=0,\]
with analytic coefficients. Set
\[
\beta(w)=w\frac{B(w)}{A(w)},\qquad
\gamma(w)=w^2\frac{C(w)}{A(w)},\qquad
\delta(w)=w^3\frac{D(w)}{A(w)}.
\]
By the Fuchs condition,
\[
\beta,\gamma,\delta\in\OO_0.
\]
Let
\[
\theta=w\frac{d}{dw}.
\]
Then the equation becomes
\begin{equation}
\left[
\theta(\theta-1)(\theta-2)
+\beta(w)\theta(\theta-1)
+\gamma(w)\theta
+\delta(w)
\right]u=0.
\label{eq:euler-form}
\end{equation}

Introduce
\[
X=u,\qquad Y=wu',\qquad Z=w^2u''.
\]
A direct computation gives
\[
\theta X=Y,
\qquad
\theta Y=Y+Z,
\]
and, using \eqref{eq:euler-form},
\[
\theta Z=-\delta X-\gamma Y+(2-\beta)Z.
\]
Hence we obtain the logarithmic system
\begin{equation}
\boxed{
\theta V=\mathcal A(w)V,
\qquad
\mathcal A(w)=
\begin{pmatrix}
0&1&0\\
0&1&1\\
-\delta(w)&-\gamma(w)&2-\beta(w)
\end{pmatrix}.}
\label{eq:log-system}
\end{equation}
The matrix $\mathcal A(w)$ is holomorphic at the origin.

For $w\neq0$, the projective coordinates $[X:Y:Z]$ are obtained from the ordinary companion coordinates $[u:u':u'']$ by the diagonal transformation
\[
[u:u':u'']\longmapsto[u:wu':w^2u''].
\]
The matrix $\operatorname{diag}(1,w,w^2)$ is invertible away from the origin and degenerates at $w=0$. Thus \eqref{eq:log-system} gives a Fuchs-adapted projective extension across the singular fiber.

On the affine chart $X\neq0$ put
\[
S=\frac{Y}{X},\qquad T=\frac{Z}{X}.
\]
Then
\[
S=w\frac{u'}u,\qquad T=w^2\frac{u''}u,
\]
and the projectivization of \eqref{eq:log-system} is
\begin{equation}
\boxed{
\begin{aligned}
\theta S&=S+T-S^2,\\
\theta T&=2T-ST-\beta T-\gamma S-\delta.
\end{aligned}}
\label{eq:projective-system}
\end{equation}
Equivalently, it is defined by the holomorphic vector field
\[
\mathcal X
=
w\frac{\partial}{\partial w}
+(S+T-S^2)\frac{\partial}{\partial S}
+(2T-ST-\beta T-\gamma S-\delta)\frac{\partial}{\partial T}.
\]
The divisor $E=\{w=0\}$ is invariant.

\subsection{The Fuchs-adapted Pfaffian transform}
\label{subsec:fuchs-pfaffian}

The logarithmic system has a natural total-space vector field
\begin{equation}
\widetilde{\mathfrak X}
=
w\partial_w
+Y\partial_X
+(Y+Z)\partial_Y
+\bigl(-\delta X-\gamma Y+(2-\beta)Z\bigr)\partial_Z,
\label{eq:fuchs-total-field}
\end{equation}
and radial field
\[
\widetilde R=X\partial_X+Y\partial_Y+Z\partial_Z.
\]
Since the vertical part of $\widetilde{\mathfrak X}$ is fiberwise linear,
\[
[\widetilde{\mathfrak X},\widetilde R]=0.
\]
With
\[
\widetilde\nu=dX\wedge dY\wedge dZ\wedge dw,
\qquad
\boxed{\widetilde\Omega=i_{\widetilde{\mathfrak X}}i_{\widetilde R}\widetilde\nu,}
\]
we obtain a holomorphic two-form across $w=0$. Since it is the contraction of a volume form by the decomposable bivector $\widetilde{\mathfrak X}\wedge\widetilde R$, one has $\widetilde\Omega\wedge\widetilde\Omega=0$; away from its zero set, $\ker\widetilde\Omega=\langle\widetilde{\mathfrak X},\widetilde R\rangle$. Because $[\widetilde{\mathfrak X},\widetilde R]=0$, this kernel distribution is integrable.

\begin{proposition}[Fuchsian transform of the Pfaffian form]
\label{prop:fuchs-transform}
On the punctured neighborhood define
\[
\Psi(w,x,y,z)=(w,X=x,Y=wy,Z=w^2z).
\]
Then
\begin{equation}
\boxed{\Psi^*\widetilde\Omega=w^4\Omega.}
\label{eq:fuchs-pullback}
\end{equation}
Hence $\Omega$ and $\widetilde\Omega$ determine the same codimension-two foliation for $w\ne0$, whereas $\widetilde\Omega$ gives a holomorphic extension through the singular divisor.

In the chart $X\ne0$, put
\[
S=\frac YX,\qquad T=\frac ZX,
\]
and set
\[
F=S+T-S^2,
\qquad
G=2T-ST-\beta T-\gamma S-\delta.
\]
Then
\begin{equation}
\boxed{
\widetilde\Omega
=X^3\bigl(w\,dS\wedge dT-G\,dS\wedge dw+F\,dT\wedge dw\bigr).}
\label{eq:fuchs-pfaff-chart}
\end{equation}
\end{proposition}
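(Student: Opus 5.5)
The plan is to treat $\Psi$ as a fiberwise linear gauge over the punctured disc, in the spirit of Proposition~\ref{prop:gauge-covariance}, and to track the extra powers of $w$ exactly. The one new feature is that $\Psi_*\mathfrak X$ is not $\widetilde{\mathfrak X}$ itself but $\widetilde{\mathfrak X}/w$. Once that is established, \eqref{eq:fuchs-pullback} is a formal consequence of naturality of interior products. The chart formula \eqref{eq:fuchs-pfaff-chart} is then a separate direct contraction in the coordinates $(X,S,T,w)$.

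\emph{Step 1: the radial field and the volume form.} Since $\Psi$ is fiberwise linear with matrix $G(w)=\operatorname{diag}(1,w,w^2)$, the radial field is natural, so $\Psi_*R=\widetilde R$. Exactly as in the proof of Proposition~\ref{prop:gauge-covariance}, the $dw$-terms in $dY=w\,dy+y\,dw$ and $dZ=w^2dz+2wz\,dw$ die after wedging with $dw$. Hence
\[
\Psi^*\widetilde\nu=w^3\nu .
\]

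\emph{Step 2: the companion field (the main computation).} I compute $\Psi_*\mathfrak X$ componentwise. The $\partial_w$ part of $\mathfrak X$ contributes $\partial_w+y\,\partial_Y+2wz\,\partial_Z$, and its vertical part contributes through the matrix $G$. In the monic normalization one has $a=\beta/w$, $b=\gamma/w^2$, $c=\delta/w^3$. Substituting gives the following components.
\begin{itemize}[label={}]
\item $\partial_X$: $\;y=Y/w$.
\item $\partial_Y$: $\;y+wz=(Y+Z)/w$.
\item $\partial_Z$: $\;2wz-w^2(az+by+cx)=\bigl(-\delta X-\gamma Y+(2-\beta)Z\bigr)/w$.
\end{itemize}
Together with the $\partial_w$ coefficient $1$, this shows
\[
\Psi_*\mathfrak X=w^{-1}\widetilde{\mathfrak X}.
\]
This identification is the step I expect to need the most care: one must get the sign conventions and the identification $a=\beta/w$, $b=\gamma/w^2$, $c=\delta/w^3$ right (after dividing by $A$).

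\emph{Step 3: assembling \eqref{eq:fuchs-pullback}.} By naturality of interior multiplication,
\[
\Psi^*\widetilde\Omega=i_{w\mathfrak X}\,i_R\,(w^3\nu)=w^4\,\Omega .
\]
For $w\ne0$ the factor $w^4$ is a unit, so the two forms have the same kernel and define the same foliation. Holomorphy of $\widetilde\Omega$ across $w=0$ is immediate, since $\widetilde{\mathfrak X}$, $\widetilde R$ and $\widetilde\nu$ are all holomorphic there.

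\emph{Step 4: the chart formula \eqref{eq:fuchs-pfaff-chart}.} I compute directly in the coordinates $(X,S,T,w)$ with $Y=SX$ and $Z=TX$.
\begin{itemize}[label={}]
\item The volume form becomes $\widetilde\nu=X^2\,dX\wedge dS\wedge dT\wedge dw$.
\item The radial field is $\widetilde R=X\partial_X$, so $i_{\widetilde R}\widetilde\nu=X^3\,dS\wedge dT\wedge dw$.
\item By \eqref{eq:projective-system}, $\widetilde{\mathfrak X}=w\partial_w+SX\partial_X+F\partial_S+G\partial_T$.
\end{itemize}
Since $dS\wedge dT\wedge dw$ involves no $dX$, the $\partial_X$ term contributes nothing to the final contraction. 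Contracting with the remaining terms gives
\[
X^3\bigl(F\,dT\wedge dw-G\,dS\wedge dw+w\,dS\wedge dT\bigr),
\]
which is the asserted expression.
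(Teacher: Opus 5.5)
Your proof is correct. For $\Psi^*\widetilde\Omega=w^4\Omega$ you argue exactly as the paper does: $\Psi_*R=\widetilde R$, $\Psi_*\mathfrak X=w^{-1}\widetilde{\mathfrak X}$, $\Psi^*\widetilde\nu=w^3\nu$, then naturality of contraction. Your componentwise check of $\Psi_*\mathfrak X$, using $a=\beta/w$, $b=\gamma/w^2$, $c=\delta/w^3$, supplies the verification the paper only states.

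For the chart formula you take a different route. The paper never contracts in the coordinates $(X,S,T,w)$. Instead it does the following:
\begin{enumerate}
\item It sets $\widetilde\eta_1=w\,dS-F\,dw$ and $\widetilde\eta_2=w\,dT-G\,dw$, and checks $\Psi^*\widetilde\eta_1=w^2\eta_1$ and $\Psi^*\widetilde\eta_2=w^3\eta_2$.
\item It combines these with $\Omega=x^3\eta_1\wedge\eta_2$ and the pullback identity to get $\widetilde\Omega=X^3w^{-1}\,\widetilde\eta_1\wedge\widetilde\eta_2$ for $w\neq0$.
\item It notes that $\widetilde\eta_1\wedge\widetilde\eta_2=w\,\omega_F$, so the expression extends across $E$.
\end{enumerate}

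Your direct contraction uses $\widetilde\nu=X^2\,dX\wedge dS\wedge dT\wedge dw$, $\widetilde R=X\partial_X$ and $\widetilde{\mathfrak X}=w\partial_w+SX\partial_X+F\partial_S+G\partial_T$. It is shorter and does not depend on $\Psi$ or on Proposition~\ref{prop:omega-pfaff}. It also holds identically on the whole chart, including $w=0$, so no extension-by-continuity step is needed.

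What the paper's route buys is structural. It ties the Fuchs chart formula to the ordinary one, $\Omega=x^3\eta_1\wedge\eta_2$, and makes visible the separate weights $w^2$ and $w^3$ picked up by the two Pfaffian generators. It also exhibits $\omega_F$ as the saturation of the naive product $\widetilde\eta_1\wedge\widetilde\eta_2$, which connects directly with Lemma~\ref{lem:fuchs-saturation}.
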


\begin{proof}
On $w\ne0$ the fields are $\Psi$-related in the precise sense
\[
d\Psi(\mathfrak X)=w^{-1}\widetilde{\mathfrak X}\circ\Psi,
\qquad
d\Psi(R)=\widetilde R\circ\Psi.
\]
Furthermore,
\[
\Psi^*\widetilde\nu
=dx\wedge d(wy)\wedge d(w^2z)\wedge dw
=w^3\nu.
\]
Naturality of contraction therefore gives
\[
\Psi^*\widetilde\Omega
=i_{w\mathfrak X}i_R(w^3\nu)=w^4\Omega.
\]

For the affine formula introduce
\[
\widetilde\eta_1=w\,dS-F\,dw,
\qquad
\widetilde\eta_2=w\,dT-G\,dw.
\]
The ordinary projective coordinates satisfy $S=ws$ and $T=w^2t$, whence
\[
\Psi^*\widetilde\eta_1=w^2\eta_1,
\qquad
\Psi^*\widetilde\eta_2=w^3\eta_2.
\]
Combining this with $\Omega=x^3\eta_1\wedge\eta_2$ and \eqref{eq:fuchs-pullback} yields, for $w\ne0$,
\[
\widetilde\Omega=\frac{X^3}{w}\,\widetilde\eta_1\wedge\widetilde\eta_2.
\]
But
\[
\widetilde\eta_1\wedge\widetilde\eta_2
=w\bigl(w\,dS\wedge dT-G\,dS\wedge dw+F\,dT\wedge dw\bigr),
\]
so the apparent factor $1/w$ cancels and \eqref{eq:fuchs-pfaff-chart} extends holomorphically to $w=0$.
\end{proof}

\begin{lemma}[Saturation of the Fuchs-adapted representative]
\label{lem:fuchs-saturation}
In the affine chart $X\ne0$, write
\[
\omega_F=
 w\,dS\wedge dT-G\,dS\wedge dw+F\,dT\wedge dw,
\]
where
\[
F=S+T-S^2,
\qquad
G=2T-ST-\beta T-\gamma S-\delta.
\]
Then $\omega_F$ is saturated in the elementary local sense that the germs $w,F,G$ have no common non-unit factor in
$\mathbb C\{w,S,T\}$. We denote by
\[
\mathscr S_F:=V(w,F,G)
\]
the coefficient zero scheme of this saturated Fuchs-adapted Pfaffian representative.
\end{lemma}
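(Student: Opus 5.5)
We need to show that the germs $w,F,G$ have no common non-unit factor in $\mathbb C\{w,S,T\}$, which is a UFD. The plan is to argue by irreducibles. Suppose $h$ is an irreducible non-unit dividing all three germs. Since $w$ is itself irreducible in $\mathbb C\{w,S,T\}$, $h$ is an associate of $w$. So it suffices to check that $w$ does not divide $F$, or alternatively that it does not divide $G$.

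This reduces to a restriction computation. A germ $g\in\mathbb C\{w,S,T\}$ is divisible by $w$ exactly when its restriction $g(0,S,T)$ vanishes identically in $\mathbb C\{S,T\}$. This follows from the Weierstrass division theorem, or simply by writing $g=g(0,S,T)+w\,g_1$ as convergent power series. The coefficient $F=S+T-S^2$ does not depend on $w$ at all, and its restriction to $w=0$ is the nonzero polynomial $S+T-S^2$. Hence $w\nmid F$, and $w,F,G$ have no common non-unit factor.

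For robustness I will also record that $F$ is irreducible: it is linear in $T$ with unit coefficient, so $\mathbb C\{w,S,T\}/(F)\cong\mathbb C\{w,S\}$, a domain. I will likewise note that $G(0,S,T)=(2-\beta_0-S)T-\gamma_0S-\delta_0$ is nonzero because of the term $-ST$. Strictly speaking neither fact is needed for saturation, but they support the later elimination $T=S(S-1)$ that leads to $I(S)$.

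The only point needing care is that divisibility here is taken among germs at an arbitrary point of the chart, not just at the origin $(w,S,T)=(0,0,0)$. At a point $(w_0,S_0,T_0)$ with $w_0\ne0$, the germ $w$ is a unit, so there is nothing to check. At points with $w_0=0$, the same restriction argument applies after translating coordinates in $S$ and $T$, since $F$ restricted to $w=0$ is still a nonzero polynomial there.

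I expect no real obstacle. The main thing to get right is the statement of the UFD and irreducibility of $w$ in the local ring, which are standard properties of $\mathbb C\{w,S,T\}$.
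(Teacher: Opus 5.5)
Your proposal is correct and follows the paper's own argument: any common irreducible factor of $w,F,G$ must be an associate of $w$, and $w\nmid F$ because $F|_{w=0}=S+T-S^2\neq0$. Your remarks about base points with $w_0\neq0$, the irreducibility of $F$, and the nonvanishing of $G(0,S,T)$ are harmless additions; the paper does not need them for this lemma.
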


\begin{proof}
Any common irreducible divisor of $w,F,G$ must divide $w$, hence is associated with
$w$. But
\[
F=S+T-S^2
\]
is not divisible by $w$. Thus $\gcd(w,F,G)=1$, which proves the asserted saturation. The final notation is then simply the definition of the coefficient zero scheme of this representative.
\end{proof}

To identify the scheme on the special fiber, keep the ambient coefficients of the Pfaffian representative and set $w=0$. There
$\widetilde{\mathfrak X}(0,V)=\mathcal A_0V$ and $\widetilde R(0,V)=V$. Since contraction with the fiber volume identifies the bivector $(\mathcal A_0V)\wedge V$ with its three Pl\"ucker coordinates, the three resulting coefficient functions are, up to signs, exactly the $2\times2$ minors of the matrix with columns $\mathcal A_0V$ and $V$. Hence the special-fiber coefficient ideal is the eigenscheme ideal of $\mathcal A_0$ in the sense of \cite{AboEklundKahlePeterson2016}, as an equality of ideals and not merely of zero sets. Notice that this statement concerns evaluation of the ambient coefficient ideal along $E$; the pullback of the two-form itself to $E$ is zero. General eigenscheme theory shows that these schemes encode the numerical data of the Jordan canonical form. The theorem below therefore does not claim the general eigenscheme--Jordan correspondence as new; its point is the explicit realization of that scheme in the distinguished scalar Fuchs companion coordinates.

\begin{theorem}[Indicial roots and the Pfaffian zero scheme]
\label{thm:indicial-pfaffian}
Let $E=\{w=0\}$. In the affine projective chart $X\ne0$, with
\[
S=\frac YX,\qquad T=\frac ZX,
\]
the intersection $\mathscr S_F\cap E$ of the coefficient zero scheme of the saturated Fuchs-adapted Pfaffian representative with the special fiber is defined by
\begin{equation}
\boxed{
T-S(S-1)=0,
\qquad
I(S)=0.}
\label{eq:pfaff-singular-scheme}
\end{equation}
Consequently, its underlying set is
\begin{equation}
\boxed{
 p_\lambda=[1:\lambda:\lambda(\lambda-1)],
 \qquad I(\lambda)=0.}
\label{eq:pfaff-indicial-directions}
\end{equation}
Equivalently, outside the zero section, the zero locus of $\widetilde\Omega$ on $E$ is the union of the residue eigenlines.

Moreover, if $\lambda$ is a root of $I$ of multiplicity $m$, then the local algebra of $\mathscr S_F\cap E$ at $p_\lambda$ is
\begin{equation}
\boxed{
\mathcal O_{\mathscr S_F\cap E,p_\lambda}
\simeq
\frac{\mathbb C\{S-\lambda\}}
{((S-\lambda)^m)}.}
\label{eq:local-singular-algebra}
\end{equation}
In particular, the scheme-theoretic multiplicity of $p_\lambda$ in $\mathscr S_F\cap E$ is exactly the multiplicity of $\lambda$ as a root of the indicial polynomial.
\end{theorem}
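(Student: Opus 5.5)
The plan is to work directly with the chart formula of Proposition~\ref{prop:fuchs-transform} and Lemma~\ref{lem:fuchs-saturation}. There $\mathscr S_F=V(w,F,G)$, so intersecting with $E=\{w=0\}$ only sets $w=0$ in $F$ and $G$. The special-fiber ideal in $\mathbb C\{S,T\}$ (locally near a point of the chart, or in $\mathbb C[S,T]$ globally on the affine chart) is therefore
\[
\bigl(F_0,G_0\bigr),\qquad F_0=S+T-S^2,\quad G_0=2T-ST-\beta_0T-\gamma_0S-\delta_0,
\]
with $\beta_0=\beta(0)$ and so on. This is the same ideal as the minors ideal described before the theorem, but I will not need that identification.

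The key step is elimination. Since $F_0=T-S(S-1)$ is linear in $T$ with unit coefficient, I use it as the first generator and replace $G_0$ by its reduction modulo $F_0$, that is, by $G_0$ with $T=S(S-1)$ substituted. A short computation gives
\[
G_0\big|_{T=S(S-1)}=(2-S-\beta_0)S(S-1)-\gamma_0S-\delta_0=-I(S),
\]
because $-S(S-1)(S-2)-\beta_0S(S-1)-\gamma_0S-\delta_0=-I(S)$. Hence $(F_0,G_0)=(T-S(S-1),\,I(S))$, which is \eqref{eq:pfaff-singular-scheme}. The reduced zero set then consists of the points with $I(S)=0$ and $T=S(S-1)$, namely the points $p_\lambda$ of \eqref{eq:pfaff-indicial-directions}.

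For the eigenline statement I will check that $p_\lambda$ spans a $\lambda$-eigenline of $\mathcal A_0$. With $v=(1,\lambda,\lambda(\lambda-1))^t$, the first two rows give $\lambda$ and $\lambda+\lambda(\lambda-1)=\lambda^2$, and the third row gives $\lambda\cdot\lambda(\lambda-1)$ exactly when $I(\lambda)=0$. I will also verify that no eigenline lies in $X=0$. On that line the system forces $Y=0$, and then $Z=0$, so every eigenline lies in the chart. Outside the zero section, the zero locus of $\widetilde\Omega$ on $E$ is the locus where $\mathcal A_0V\wedge V=0$, by the Plücker remark preceding the theorem, so it is the union of the residue eigenlines.

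For the local algebra I use the change of variables $T'=T-S(S-1)$, an analytic coordinate change at $p_\lambda$. Then $\mathbb C\{S-\lambda,T'\}/(T',I(S))\simeq\mathbb C\{S-\lambda\}/(I(S))$. Writing $I(S)=(S-\lambda)^m u(S)$ with $u(\lambda)\ne0$, the factor $u$ is a unit in the local ring, and \eqref{eq:local-singular-algebra} follows. Its length is $m$ by the preliminaries.

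I expect no real obstacle; the only care points are the sign and bookkeeping in the elimination identity $G_0\equiv -I(S)$ and the check that the chart $X\ne0$ captures all eigenlines.
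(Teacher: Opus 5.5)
Your proposal is correct and follows essentially the same route as the paper: you restrict the chart coefficients $(w,F,G)$ to $E$, eliminate $T$ using $F_0=T-S(S-1)$ (equivalently, via the identity $G_0=-I(S)+(2-S-\beta_0)F_0$), and then split off the unit factor of $I$ at $\lambda$ to get the local algebra $\mathbb C\{S-\lambda\}/((S-\lambda)^m)$. The only difference is the order of the argument: the paper first derives the set-theoretic description from the eigenvector condition $(\mathcal A_0V)\wedge V=0$, whereas you read the support off the ideal and check the eigenline interpretation afterwards.
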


\begin{proof}
At $w=0$, the total-space field \eqref{eq:fuchs-total-field} is the vertical linear field $V\mapsto\mathcal A_0V$, while $\widetilde R$ is the radial field $V\mapsto V$. Since contraction with the volume form is injective on bivectors,
\[
\widetilde\Omega(0,V)=0
\iff
(\mathcal A_0V)\wedge V=0.
\]
For $V\ne0$ this is equivalent to $\mathcal A_0V=\lambda V$ for some $\lambda$. The first two rows give
\[
Y=\lambda X,
\qquad
Z=\lambda(\lambda-1)X.
\]
An eigenvector cannot have $X=0$, and the last row is equivalent to $I(\lambda)=0$. This proves the set-theoretic statement in \eqref{eq:pfaff-indicial-directions}.

For the scheme structure, Lemma~\ref{lem:fuchs-saturation} fixes the saturated representative and its coefficient zero scheme $\mathscr S_F$. Formula \eqref{eq:fuchs-pfaff-chart} gives its local ideal through the three coefficients
\[
w,\qquad F=S+T-S^2,\qquad G=2T-ST-\beta T-\gamma S-\delta.
\]
Restricting to $E$ gives the ideal $(F_0,G_0)\subset\mathbb C\{S,T\}$, where
\[
F_0=T-S(S-1),
\]
and
\[
G_0=2T-ST-\beta_0T-\gamma_0S-\delta_0.
\]
Since $\partial F_0/\partial T=1$, the equation $F_0=0$ eliminates $T$ holomorphically. Substituting $T=S(S-1)$ into $G_0$ gives
\begin{align*}
G_0\bigl(S,S(S-1)\bigr)
&=2S(S-1)-S^2(S-1)
   -\beta_0S(S-1)-\gamma_0S-\delta_0\\
&=-\Bigl[S(S-1)(S-2)
+\beta_0S(S-1)+\gamma_0S+\delta_0\Bigr]\\
&=-I(S).
\end{align*}
In fact one has the exact identity
\[
G_0=-I(S)+(2-S-\beta_0)F_0.
\]
Hence
\[
(F_0,G_0)=\bigl(T-S(S-1),I(S)\bigr),
\]
which proves \eqref{eq:pfaff-singular-scheme} as an equality of ideals, not merely set-theoretically.

If $\lambda$ has multiplicity $m$, write
\[
I(S)=(S-\lambda)^mJ(S),\qquad J(\lambda)\ne0.
\]
The germ $J$ is a unit in the local ring at $\lambda$, so elimination of $T$ yields
\[
\frac{\mathbb C\{S-\lambda,T-\lambda(\lambda-1)\}}
{(F_0,G_0)}
\simeq
\frac{\mathbb C\{S-\lambda\}}
{((S-\lambda)^m)}.
\]
This proves \eqref{eq:local-singular-algebra} and the multiplicity statement.
\end{proof}

\begin{remark}[Eigenschemes, gauges, and the Fuchs lattice]
The equations
\[
T=S(S-1),\qquad I(S)=0
\]
are written in the distinguished Fuchs companion coordinates
$(X,Y,Z)=(u,wu',w^2u'')$.  Their scheme-theoretic meaning is the eigenscheme of the residue matrix.  If two logarithmic systems are related by a holomorphic invertible gauge
\[
V=G(w)\widetilde V,\qquad G\in\mathrm{GL}(3,\mathcal O_0),
\]
then
\[
\widetilde{\mathcal A}_0=G(0)^{-1}\mathcal A_0G(0).
\]
Hence the induced projective automorphism identifies their residue eigenschemes, in accordance with the similarity invariance in \cite{AboEklundKahlePeterson2016}; in particular the local scheme lengths are preserved.  We do not claim invariance under singular or meromorphic changes of logarithmic lattice.  Such changes may shift residue eigenvalues by integers and alter the chosen logarithmic extension; compare, for example, the discussion of logarithmic lattices and residues in \cite{Szabo2012Nahm}.  Thus the explicit ideal above belongs to the Fuchs-adapted lattice, whereas its regular-gauge isomorphism class and local lengths are stable under holomorphic invertible changes of frame.
\end{remark}

\begin{remark}[Total length]
Eliminating $T$ gives\[\mathbb C[S,T]/(T-S(S-1),I(S))\simeq\mathbb C[S]/(I(S)).\]Hence the zero-dimensional eigenscheme has total length three, counted with scheme-theoretic multiplicity. This agrees with the general eigenscheme description of a cyclic residue matrix of size three.
\end{remark}

\begin{remark}
The factor $w^4$ in \eqref{eq:fuchs-pullback} has a simple origin: $w^3$ is the Jacobian factor of the singular fiber gauge $(x,y,z)\mapsto(x,wy,w^2z)$, while the remaining factor $w$ comes from replacing $d/dw$ by the Euler field $\theta=w\,d/dw$. Thus the logarithmic Pfaffian form is the Fuchs-adapted holomorphic extension induced by this singular gauge transformation.
\end{remark}

\subsection{Indicial roots and the residue}

Let
\[
\beta_0=\beta(0),\qquad
\gamma_0=\gamma(0),\qquad
\delta_0=\delta(0).
\]
The indicial polynomial is
\begin{equation}
I(\lambda)
=
\lambda(\lambda-1)(\lambda-2)
+\beta_0\lambda(\lambda-1)
+\gamma_0\lambda
+\delta_0.
\label{eq:indicial}
\end{equation}
The residue matrix is
\[
\mathcal A_0=
\begin{pmatrix}
0&1&0\\
0&1&1\\
-\delta_0&-\gamma_0&2-\beta_0
\end{pmatrix}.
\]

\begin{proposition}
\label{prop:indicial-residue}
We have
\[
\boxed{\det(\lambda I-\mathcal A_0)=I(\lambda).}
\]
\end{proposition}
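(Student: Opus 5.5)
The plan is to compute the characteristic polynomial of the companion-type matrix $\mathcal A_0$ directly, by cofactor expansion along its first column. That column has only one nonzero entry, $-\delta_0$, in the third row.

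\textbf{Setup.} Write
\[
\lambda I-\mathcal A_0=
\begin{pmatrix}
\lambda&-1&0\\
0&\lambda-1&-1\\
\delta_0&\gamma_0&\lambda-2+\beta_0
\end{pmatrix}.
\]

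\textbf{The $(1,1)$ cofactor.} It is $\lambda$ times the lower-right $2\times2$ determinant,
\[
(\lambda-1)(\lambda-2+\beta_0)+\gamma_0 .
\]

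\textbf{The $(3,1)$ cofactor.} It is $\delta_0$ times
\[
\det\begin{pmatrix}-1&0\\ \lambda-1&-1\end{pmatrix}=1 .
\]
The sign $(-1)^{3+1}=+1$ keeps this contribution equal to $+\delta_0$.

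\textbf{Assembly.} Adding the two contributions gives
\[
\lambda(\lambda-1)(\lambda-2)+\beta_0\lambda(\lambda-1)+\gamma_0\lambda+\delta_0,
\]
which is exactly \eqref{eq:indicial}.

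\textbf{Cross-check.} As a consistency check, I would note that the proof of Theorem~\ref{thm:indicial-pfaffian} already shows that the eigenvalues of $\mathcal A_0$ are precisely the roots of $I$. Moreover, the eigenscheme there is $\mathbb C[S]/(I(S))$ of total length three. Since both polynomials are monic of degree three, this is consistent with the claimed identity. It does not replace the direct computation, however, because it does not by itself control multiplicities of the characteristic polynomial.

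\textbf{Main obstacle.} There is no real obstacle. The only point requiring care is the sign bookkeeping in the cofactor expansion, namely the sign of the $(3,1)$ minor. The identity is a polynomial identity in $\lambda,\beta_0,\gamma_0,\delta_0$, and the plan above establishes it verbatim.
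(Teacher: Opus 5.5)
Your proof is correct and is essentially the paper's: a direct cofactor expansion of $\det(\lambda I-\mathcal A_0)$, except that you expand along the first column and pick up $\delta_0$ from the $(3,1)$ cofactor, while the paper expands along the first row and picks up $\delta_0$ from the $(1,2)$ cofactor. One small wording slip: it is the first column of $\mathcal A_0$, not of $\lambda I-\mathcal A_0$, that has a single nonzero entry, but your computation correctly uses both nonzero entries $\lambda$ and $\delta_0$.
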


\begin{proof}
Indeed,
\[
\lambda I-\mathcal A_0
=
\begin{pmatrix}
\lambda&-1&0\\
0&\lambda-1&-1\\
\delta_0&\gamma_0&\lambda-2+\beta_0
\end{pmatrix},
\]
and expansion along the first row gives
\begin{align*}
\det(\lambda I-\mathcal A_0)
&=\lambda\left[(\lambda-1)(\lambda-2+\beta_0)+\gamma_0\right]+\delta_0\\
&=I(\lambda).
\end{align*}
\end{proof}

Thus the indicial roots are precisely the eigenvalues of the residue.

\begin{proposition}
\label{prop:projective-singularities}
The singular points of the induced projective vector field on the special fiber $E\simeq\Pj^2$ are exactly
\[
\boxed{
p_\lambda=[1:\lambda:\lambda(\lambda-1)],
\qquad I(\lambda)=0.}
\]
All these points belong to the affine chart $X\neq0$.
\end{proposition}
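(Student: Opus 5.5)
The plan is to read the induced projective vector field on the special fiber $E$ directly from the Fuchs-adapted system. The first step is to set $w=0$ in the projectivization \eqref{eq:projective-system}. On the chart $X\ne0$, the vertical part of $\mathcal X$ restricted to $E$ is then
\[
(S+T-S^2)\,\partial_S+(2T-ST-\beta_0T-\gamma_0S-\delta_0)\,\partial_T=F_0\,\partial_S+G_0\,\partial_T .
\]
Its zeros are the common zeros of $F_0$ and $G_0$. By the exact identity $G_0=-I(S)+(2-S-\beta_0)F_0$ from the proof of Theorem~\ref{thm:indicial-pfaffian}, these are exactly the points $T=S(S-1)$ with $I(S)=0$, that is, the points $p_\lambda$ with $I(\lambda)=0$.

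The second step is to check that there are no further singular points on the line at infinity $X=0$. I would argue intrinsically rather than in the other charts. A point $[V]\in\Pj^2$ is a zero of the projectivized linear field $V\mapsto\mathcal A_0V$ if and only if $\mathcal A_0V\wedge V=0$, that is, if and only if $V$ is an eigenvector of $\mathcal A_0$. Suppose $\mathcal A_0V=\lambda V$ with $V=(X,Y,Z)$. The first row gives $Y=\lambda X$, and the second gives $Y+Z=\lambda Y$, so $Z=\lambda(\lambda-1)X$. Hence $X=0$ forces $Y=Z=0$, which is impossible for $V\neq 0$. So every eigenline lies in the chart $X\ne0$ and has the form $p_\lambda$. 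Proposition~\ref{prop:indicial-residue} then shows that the possible $\lambda$ are exactly the roots of $I$.

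The main point needing care is the identification of the singular points of the projectivized field with eigenlines. For this I would use the standard fact that a linear field $AV\cdot\partial_V$ descends to $\Pj^2$, and that its projection vanishes at $[V]$ precisely when $AV\in\C V$. This holds because the kernel of $d\pi_V$ is spanned by the radial vector. With that fact in hand, everything else is the row computation above together with the earlier results.
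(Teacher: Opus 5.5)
Your proposal is correct, and its core is the paper's own argument: the first two rows of $\mathcal A_0V=\lambda V$ give $Y=\lambda X$ and $Z=\lambda(\lambda-1)X$, so $X\neq0$ and the eigenline is $p_\lambda$. The differences are minor: your affine-chart check in the first step becomes redundant once the eigenline argument is in place, and you obtain $I(\lambda)=0$ from Proposition~\ref{prop:indicial-residue}, whereas the paper reads it directly off the third row.
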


\begin{proof}
Let $v=(X,Y,Z)^t\neq0$ satisfy
\[
\mathcal A_0v=\lambda v.
\]
The first two rows give
\[
Y=\lambda X,
\qquad
Z=\lambda(\lambda-1)X.
\]
If $X=0$, then $Y=Z=0$, a contradiction. Hence $X\neq0$, and after normalizing $X=1$ we obtain
\[
v_\lambda=(1,\lambda,\lambda(\lambda-1))^t.
\]
The last row is equivalent to $I(\lambda)=0$.
\end{proof}

The projective point $p_\lambda$ will also be denoted in the affine chart by
\[
P_\lambda=(0,\lambda,\lambda(\lambda-1)),
\]
where the first coordinate is the base coordinate $w$.

\subsection{The projective spectrum}

Let $\lambda_1,\lambda_2,\lambda_3$ be the indicial roots, counted with multiplicity.

\begin{theorem}[Projective spectrum]
\label{thm:projective-spectrum}
At $P_{\lambda_i}$ one has
\[
\boxed{
\Spec D\mathcal X(P_{\lambda_i})
=
\{1,\lambda_j-\lambda_i,\lambda_k-\lambda_i\},
}
\qquad \{i,j,k\}=\{1,2,3\}.
\]
\end{theorem}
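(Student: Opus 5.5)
The plan is to compute the Jacobian of the holomorphic vector field $\mathcal X$ at $P_{\lambda_i}=(0,\lambda_i,\lambda_i(\lambda_i-1))$ in the coordinates $(w,S,T)$. We split off the base direction and reduce the fiber part to a standard fact about projectivized linear maps.

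The first step is the base direction. Since $\mathcal X=w\partial_w+F\partial_S+G\partial_T$ and the $w$-component is $w$ itself, the first row of $D\mathcal X(P_{\lambda_i})$ is $(1,0,0)$. The matrix is therefore block lower triangular with respect to the splitting $(w)\oplus(S,T)$. Its spectrum is $\{1\}$ together with the spectrum of the $2\times2$ block $\partial(F_0,G_0)/\partial(S,T)$ at $(\lambda_i,\lambda_i(\lambda_i-1))$, where $F_0,G_0$ are the restrictions to $w=0$. The $w$-derivatives of $\beta,\gamma,\delta$ only enter the first column, so they do not affect the eigenvalues.

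The second step is the fiber block, which is the linearization at $p_{\lambda_i}$ of the projectivization of the linear field $V\mapsto\mathcal A_0V$ on $\mathbb P^2$. Here I would use the general fact that for a linear field $A$ with eigenvector $v$ of eigenvalue $\lambda$, the induced field on $\mathbb P^2$ at $[v]$ has linearization conjugate to $A-\lambda$ acting on the quotient $\mathbb C^3/\langle v\rangle$. Concretely, in the affine chart the induced field is $\pi_*(A V)$ with the radial component removed. Differentiating at $v$ along a complement gives the map $h\mapsto (A-\lambda)h \bmod v$. Its eigenvalues are those of $A$ on $\mathbb C^3/\langle v\rangle$, shifted by $-\lambda$.

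By Proposition~\ref{prop:indicial-residue}, the characteristic polynomial of $\mathcal A_0$ is $I$, with roots $\lambda_1,\lambda_2,\lambda_3$ counted with multiplicity. Hence the characteristic polynomial of the quotient map is $I(\mu)/(\mu-\lambda_i)$, with roots $\lambda_j,\lambda_k$. The fiber block therefore has spectrum $\{\lambda_j-\lambda_i,\lambda_k-\lambda_i\}$, which gives the claim, including the cases of repeated roots.

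The main obstacle is making the quotient identification precise in the specific chart $X\ne0$ rather than citing it. As a direct check I would compute the $2\times2$ Jacobian explicitly:
\[
\begin{pmatrix}1-2\lambda&1\\ -T-\gamma_0 & 2-S-\beta_0\end{pmatrix}\Big|_{S=\lambda,\,T=\lambda(\lambda-1)}.
\]
Its trace is $3-3\lambda-\beta_0$. This should equal $(\lambda_j+\lambda_k)-2\lambda$, and it does, since $\lambda_1+\lambda_2+\lambda_3=3-\beta_0$. For the determinant, I would verify that it equals $I'(\lambda)$ in the simple-root case, using the identity $G_0=-I(S)+(2-S-\beta_0)F_0$ from the proof of Theorem~\ref{thm:indicial-pfaffian}. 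Indeed, by the product rule, $I'(\lambda_i)=(\lambda_i-\lambda_j)(\lambda_i-\lambda_k)$, which is the product $(\lambda_j-\lambda_i)(\lambda_k-\lambda_i)$. Since both trace and determinant are polynomial in the coefficients, this matches the claimed eigenvalues in general.
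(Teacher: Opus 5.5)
Your proposal is correct and follows essentially the paper's own route. You get the eigenvalue $1$ from $\theta w=w$ (your block-triangular observation makes this precise), the fiber block from $\mathcal A_0-\lambda_i I$ acting on $\mathbb C^3/\mathbb C v_i$, and the affine check from $\operatorname{tr}M_{\lambda_i}=(\lambda_j-\lambda_i)+(\lambda_k-\lambda_i)$ and $\det M_{\lambda_i}=I'(\lambda_i)$.

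One small simplification is available. The identity $\det M_\lambda=I'(\lambda)$ holds for every $\lambda$, since your row reduction with $G_0=-I(S)+(2-S-\beta_0)F_0$ never uses simplicity. Likewise $I'(\lambda_i)=(\lambda_i-\lambda_j)(\lambda_i-\lambda_k)$ holds even with repeated roots. So both the restriction to simple roots and the density step are unnecessary.
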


\begin{proof}
The base coordinate satisfies $\theta w=w$, hence the transverse eigenvalue is $1$. On the special fiber, the vector field is the projectivization of the linear field defined by $\mathcal A_0$. The tangent space to $\mathbb P^2$ at the eigenline $[v_i]$ is naturally $\operatorname{Hom}(\mathbb Cv_i,\mathbb C^3/\mathbb Cv_i)$. The derivative of the projectivized linear field is induced by $\mathcal A_0-\lambda_iI$ on the quotient $\mathbb C^3/\mathbb Cv_i$. Its characteristic polynomial therefore has roots
\[
\lambda_j-\lambda_i,
\qquad
\lambda_k-\lambda_i,
\]
counted with algebraic multiplicity. This formulation does not require $\mathcal A_0$ to be semisimple or the remaining roots to have independent eigenlines.
This proves the assertion.
\end{proof}

A direct computation in the affine coordinates $(S,T)$ gives the same result. The tangential block at $P_\lambda$ is
\[
M_\lambda=
\begin{pmatrix}
1-2\lambda&1\\
-\lambda(\lambda-1)-\gamma_0&2-\lambda-\beta_0
\end{pmatrix}.
\]
If $\lambda=\lambda_i$, then
\[
\operatorname{tr}M_{\lambda_i}
=(\lambda_j-\lambda_i)+(\lambda_k-\lambda_i)
\]
and
\[
\det M_{\lambda_i}
=I'(\lambda_i)
=(\lambda_j-\lambda_i)(\lambda_k-\lambda_i).
\]
In particular, a multiple indicial root gives a zero tangential eigenvalue. By Theorem~\ref{thm:indicial-pfaffian}, this is exactly the case in which the corresponding point of $\mathscr S_F\cap E$ is non-reduced.

\begin{corollary}[Multiple indicial roots]
\label{cor:multiple-indicial-root}
For an indicial root $\lambda$, the following conditions are equivalent:
\begin{enumerate}[label=\rm(\roman*)]
\item $\lambda$ is a multiple root of $I$;
\item the point $p_\lambda$ is non-reduced in $\mathscr S_F\cap E$;
\item the tangential linearization at $P_\lambda$ is degenerate;
\item $0$ is a tangential characteristic exponent at $P_\lambda$.
\end{enumerate}
\end{corollary}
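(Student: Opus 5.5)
The plan is to prove the cycle of equivalences (i)$\Leftrightarrow$(ii) and (i)$\Leftrightarrow$(iii)$\Leftrightarrow$(iv), using Theorem~\ref{thm:indicial-pfaffian} for the scheme statement and Theorem~\ref{thm:projective-spectrum}, together with the explicit tangential block $M_\lambda$, for the linearization statements. Throughout, $\lambda=\lambda_i$ is a fixed indicial root and $\lambda_j,\lambda_k$ denote the remaining two roots counted with multiplicity.

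\textbf{(i)$\Leftrightarrow$(ii).} By \eqref{eq:local-singular-algebra}, the local algebra of $\mathscr S_F\cap E$ at $p_\lambda$ is $\mathbb C\{\xi\}/(\xi^m)$, where $m$ is the multiplicity of $\lambda$ as a root of $I$. This algebra is reduced exactly when $m=1$, because $\xi$ is a nonzero nilpotent whenever $m\ge2$, while for $m=1$ the algebra is $\mathbb C$. Hence $p_\lambda$ is non-reduced if and only if $\lambda$ is a multiple root.

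\textbf{(iii)$\Leftrightarrow$(iv).} The tangential linearization at $P_\lambda$ is the $2\times2$ block $M_\lambda$, that is, the derivative along $E$ of the projective field restricted to the special fiber. A linear map is degenerate precisely when $0$ is one of its eigenvalues. These eigenvalues are the tangential characteristic exponents, so degeneracy is the same as $0$ being a tangential characteristic exponent.

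\textbf{(i)$\Leftrightarrow$(iv).} By Theorem~\ref{thm:projective-spectrum}, the tangential exponents at $P_{\lambda_i}$ are $\lambda_j-\lambda_i$ and $\lambda_k-\lambda_i$, counted with algebraic multiplicity. The transverse exponent $1$ is excluded because it corresponds to the base direction, not a tangential one. One of the tangential exponents vanishes exactly when $\lambda_i$ coincides with another root, which is exactly the statement that $\lambda_i$ is a multiple root.

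As an independent check of (i)$\Leftrightarrow$(iii), one can use the identity $\det M_{\lambda_i}=I'(\lambda_i)$. Since $I$ is monic of degree three, $I'(\lambda_i)=(\lambda_i-\lambda_j)(\lambda_i-\lambda_k)$, which equals $(\lambda_j-\lambda_i)(\lambda_k-\lambda_i)$ because the two sign changes cancel. Therefore $\det M_{\lambda_i}=0$ if and only if $I'(\lambda_i)=0$, that is, if and only if $\lambda_i$ is a multiple root of $I$.

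The only step that needs care is checking that the direct expansion of $\det M_\lambda$ really equals $I'(\lambda)$. I would do this by expanding $(1-2\lambda)(2-\lambda-\beta_0)+\lambda(\lambda-1)+\gamma_0$ and comparing it with the derivative of \eqref{eq:indicial}.
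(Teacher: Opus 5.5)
Your proof is correct and follows the paper's argument. You get (i)$\Leftrightarrow$(ii) from the local algebra $\mathbb C\{\xi\}/(\xi^m)$ of Theorem~\ref{thm:indicial-pfaffian}, and the link to (iii) and (iv) through $\det M_\lambda=I'(\lambda)$; your proposed expansion checks out, giving $3\lambda^2-6\lambda+2+\beta_0(2\lambda-1)+\gamma_0=I'(\lambda)$ identically. The paper uses this determinant identity as its primary link, whereas you lead with Theorem~\ref{thm:projective-spectrum}, but that is the same fact read through the tangential eigenvalues $\lambda_j-\lambda_i$ and $\lambda_k-\lambda_i$.
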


\begin{proof}
By Theorem~\ref{thm:indicial-pfaffian}, the point $p_\lambda$ is non-reduced in $\mathscr S_F\cap E$ exactly
when the multiplicity of $\lambda$ as a root of $I$ is greater than one. On the other
hand,
\[
\det M_\lambda=I'(\lambda).
\]
Hence $\lambda$ is multiple if and only if the tangential block is singular, which is
equivalent to the occurrence of the characteristic exponent $0$.
\end{proof}

\section{Frobenius solutions and invariant sections}
\label{sec:frobenius-sections}

We now relate the usual Frobenius method with the projective system \eqref{eq:projective-system}. Fix $\lambda\in\C$ and regard $w^\lambda\C[[w]]$ as the free rank-one $\C[[w]]$-module generated by the formal symbol $w^\lambda$. We extend the Euler derivation by
\[
\theta(w^\lambda f)=w^\lambda(\lambda f+\theta f).
\]
A normalized formal Frobenius series of exponent $\lambda$ is an expression
\[
u=w^\lambda f(w),
\qquad
f\in1+w\C[[w]].
\]

\begin{definition}
A formal invariant section through $P_\lambda$ is a pair $S,T\in\C[[w]]$ such that
\[
S(0)=\lambda,
\qquad
T(0)=\lambda(\lambda-1),
\]
and \eqref{eq:projective-system} is satisfied formally.
\end{definition}

\begin{theorem}[Frobenius solutions and invariant sections]
\label{thm:frobenius-section}
There is a one-to-one correspondence between normalized formal Frobenius solutions
\[
u=w^\lambda f,
\qquad f\in1+w\C[[w]],
\]
of \eqref{eq:euler-form} and formal invariant sections through $P_\lambda$. The correspondence is
\[
\boxed{
S=\frac{\theta u}{u}
=\lambda+\frac{\theta f}{f},
}
\]
\[
\boxed{
T=\frac{\theta(\theta-1)u}{u}
=S^2-S+\theta S.
}
\]
Conversely,
\[
\boxed{
f=\exp\bigl(\theta^{-1}(S-\lambda)\bigr).}
\]
In particular, the existence of either object implies $I(\lambda)=0$.
\end{theorem}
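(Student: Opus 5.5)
The plan is to exploit the fact that the projective system \eqref{eq:projective-system} is exactly the projectivization of the logarithmic system \eqref{eq:log-system}. Formally, $(X,Y,Z)=(u,\theta u,\theta(\theta-1)u)$ solves it, because $Y=wu'$ and $Z=w^2u''=\theta(\theta-1)u$. All computations take place in the ring $\C[[w]]$, with $w^\lambda$ carried as a formal factor. Since $f\in 1+w\C[[w]]$ is a unit, the quotients $S=\theta u/u$ and $T=\theta(\theta-1)u/u$ are honest power series in $\C[[w]]$; the factor $w^\lambda$ cancels by the rule $\theta(w^\lambda f)=w^\lambda(\lambda f+\theta f)$.

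\textbf{Forward direction.} Let $u=w^\lambda f$ be a solution of \eqref{eq:euler-form}.
\begin{enumerate}[label=(\arabic*)]
\item Compute $S=\lambda+\theta f/f$, so $S(0)=\lambda$, because $\theta f/f\in w\C[[w]]$.
\item From $\theta u=Su$, apply $\theta$ again to get $\theta^2u=(\theta S+S^2)u$. Hence $T=\theta(\theta-1)u/u=S^2-S+\theta S$, and $T(0)=\lambda^2-\lambda$, since $\theta S$ has no constant term.
\item Verify \eqref{eq:projective-system}. The first equation $\theta S=S+T-S^2$ is just the formula for $T$. For the second, use the quotient rule $\theta(Z/X)=\theta Z/X-(Z/X)(\theta X/X)$ together with $\theta Z=-\delta X-\gamma Y+(2-\beta)Z$. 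The latter is equivalent to \eqref{eq:euler-form}, since $\theta Z=\theta(\theta-1)(\theta-2)u+2Z$.
\end{enumerate}

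\textbf{Converse direction.} Let $(S,T)$ be a formal invariant section.
\begin{enumerate}[label=(\arabic*)]
\item Since $S-\lambda\in w\C[[w]]$, the inverse $\theta^{-1}$ is defined on it termwise by $w^n\mapsto w^n/n$ for $n\ge1$, giving a series $g\in w\C[[w]]$ with $\theta g=S-\lambda$.
\item Set $f=\exp(g)\in1+w\C[[w]]$, which is a well-defined formal series. Then $\theta f/f=S-\lambda$, so $u=w^\lambda f$ satisfies $\theta u=Su$.
\item The first projective equation forces $T=\theta S+S^2-S=\theta(\theta-1)u/u$.
\item The second projective equation, read backwards through the same quotient identity, gives $\theta Z=-\delta X-\gamma Y+(2-\beta)Z$ after multiplying by $X=u$, which is invertible in the module. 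This is \eqref{eq:euler-form}.
\end{enumerate}

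\textbf{Bijectivity.} The two maps are mutually inverse. First, $f$ is recovered uniquely from $\theta f/f=S-\lambda$ together with the normalization $f(0)=1$, because $\theta h=0$ in $\C[[w]]$ forces $h$ to be constant. Second, $S$ is determined by $u$ directly.

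\textbf{Final claim.} Evaluate the projective system at $w=0$. Since $\theta$ kills constants modulo $w$, the point $(S(0),T(0))=P_\lambda$ must be a zero of $(F_0,G_0)$. By the identity $G_0=-I(S)+(2-S-\beta_0)F_0$ from Theorem~\ref{thm:indicial-pfaffian}, this gives $I(\lambda)=0$. Alternatively, one can quote Proposition~\ref{prop:projective-singularities}. For Frobenius solutions, the statement follows through the correspondence.

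\textbf{Main obstacle.} The delicate points are formal rather than conceptual: making the logarithmic derivative and $\exp\circ\theta^{-1}$ rigorous in $\C[[w]]$ with the symbolic factor $w^\lambda$, and confirming that no constant of integration is lost, which is where the normalization $f(0)=1$ enters. The algebraic identity in the forward direction (3) is a routine expansion, and I would only sketch it.
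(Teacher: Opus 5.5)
Your proposal is correct and follows essentially the paper's route. You define $S=\theta u/u$, use the Leibniz step $\theta^2u=(\theta S+S^2)u$ to get the first equation, and obtain the second as the scalar equation divided by $u$; your passage through $\theta Z=-\delta X-\gamma Y+(2-\beta)Z$ is the same computation as the paper's $\theta(\theta-1)(\theta-2)u/u=\theta T+ST-2T$. The converse via $f=\exp(\theta^{-1}(S-\lambda))$, normalized by $f(0)=1$, also matches. Your explicit derivation of $I(\lambda)=0$ from the constant terms, using $G_0=-I(S)+(2-S-\beta_0)F_0$, supplies a step the paper's proof leaves implicit.
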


\begin{proof}
Let $u=w^\lambda f$ be a normalized formal Frobenius solution and put
\[
S=\frac{\theta u}{u}.
\]
Since $\theta u=Su$, we have
\[
\theta^2u=(\theta S+S^2)u,
\]
so
\[
T:=\frac{\theta(\theta-1)u}{u}=\theta S+S^2-S.
\]
This gives the first equation of \eqref{eq:projective-system}. Moreover,
\[
Tu=\theta(\theta-1)u,
\]
and therefore
\[
\frac{\theta(\theta-1)(\theta-2)u}{u}
=\theta T+ST-2T.
\]
Dividing \eqref{eq:euler-form} by $u$ gives the second equation of \eqref{eq:projective-system}.

Conversely, if $(S,T)$ is a formal invariant section, then $S-\lambda\in w\C[[w]]$ and the equation
\[
\theta(\log f)=S-\lambda
\]
has the unique solution with $f(0)=1$ given by
\[
f=\exp\bigl(\theta^{-1}(S-\lambda)\bigr).
\]
Setting $u=w^\lambda f$ recovers $S$ and $T$, and the second projective equation is precisely the scalar equation divided by $u$.
\end{proof}

\subsection{Jets and resonances}

Write
\[
f=1+\sum_{n\geq1}a_nw^n,
\qquad
S=\lambda+\sum_{n\geq1}s_nw^n.
\]
Since
\[
S-\lambda=\frac{\theta f}{f},
\]
one obtains, for every $n\geq1$,
\[
\boxed{
s_n=na_n+\Phi_n(a_1,\ldots,a_{n-1}),
}
\]
where $\Phi_n$ is polynomial. Thus the correspondence between the scalar and projective jets is triangular and formally invertible.

Write
\[
\beta(w)=\sum_{r\geq0}\beta_rw^r,
\qquad
\gamma(w)=\sum_{r\geq0}\gamma_rw^r,
\qquad
\delta(w)=\sum_{r\geq0}\delta_rw^r,
\]
and, for $r\geq1$, put
\[
Q_r(\rho)=\beta_r\rho(\rho-1)+\gamma_r\rho+\delta_r.
\]
If
\[
u=w^\lambda\sum_{n\geq0}a_nw^n,
\qquad a_0=1,
\]
then
\[
\mathscr Lu
=w^\lambda\sum_{n\geq0}E_nw^n,
\]
where
\begin{equation}
\boxed{
E_n
=I(\lambda+n)a_n
+\sum_{r=1}^nQ_r(\lambda+n-r)a_{n-r}.}
\label{eq:frobenius-recurrence}
\end{equation}
The Frobenius recurrence is $E_n=0$.

Let
\[
G=\theta T+ST-2T+\beta T+\gamma S+\delta
\]
be the residual of the second projective equation. By the preceding proof,
\begin{equation}
\boxed{
G=\frac{\mathscr L(w^\lambda f)}{w^\lambda f}.}
\label{eq:residual-identity}
\end{equation}

\begin{proposition}[Resonant jet obstruction]
\label{prop:resonant-obstruction}
Suppose
\[
\lambda_j-\lambda_i=m\in\N_{>0}
\]
and consider the Frobenius recurrence associated with $\lambda_i$. Assume
\[
E_0=\cdots=E_{m-1}=0.
\]
Then the coefficient of $w^m$ in $G$ is
\[
\mathcal R_m
=
\sum_{r=1}^{m}
Q_r(\lambda_i+m-r)a_{m-r}.
\]
Consequently,
\[
\boxed{
\mathcal R_m=0
}
\]
if and only if the chosen invariant $(m-1)$-jet, obtained from a solution of the recurrence equations through degree $m-1$, extends to an invariant $m$-jet.
\end{proposition}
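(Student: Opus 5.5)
We compute with the residual identity \eqref{eq:residual-identity}. A finite jet $(a_1,\dots,a_{m-1})$ solving $E_1=\dots=E_{m-1}=0$ for $\lambda=\lambda_i$ gives, via the triangular relation $s_n=na_n+\Phi_n(a_1,\dots,a_{n-1})$, an $(m-1)$-jet of $S$. The first projective equation then fixes $T=S^2-S+\theta S$ to the same order. Call this pair invariant through order $m-1$ when the projective equations hold modulo $w^m$.

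Because the first equation is imposed identically, only the second equation can obstruct. By \eqref{eq:residual-identity}, $G\cdot w^{\lambda}f=\mathscr L(w^\lambda f)=w^\lambda\sum_n E_nw^n$. Since $f\in 1+w\C[[w]]$ is a unit, $G\equiv 0 \pmod{w^m}$ if and only if $E_0=\dots=E_{m-1}=0$. When this holds, the $w^m$ coefficient of $G$ equals $E_m$, because $G=f^{-1}\sum E_nw^n$ and $f^{-1}\equiv1$ at lowest order.

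Now evaluate $E_m$ from \eqref{eq:frobenius-recurrence}:
\[
E_m=I(\lambda_i+m)a_m+\sum_{r=1}^mQ_r(\lambda_i+m-r)a_{m-r},
\]
and $I(\lambda_i+m)=I(\lambda_j)=0$. Hence $a_m$ drops out and the coefficient equals $\mathcal R_m$. It depends only on the chosen jet $a_0,\dots,a_{m-1}$, and so only on the invariant $(m-1)$-jet, by triangularity.

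\emph{Care point.} One must check that extending $a_m$ (equivalently $s_m$, through $s_m=ma_m+\Phi_m$) is the only freedom at order $m$. We must also check that the $w^m$ coefficient of $G$ involves the new unknown only through $I(\lambda_i+m)a_m$. This holds because $T$ to order $m$ depends on $s_m$ only through $\theta S$ and linear terms, consistent with $E_m$. The dependence of $E_m$ on $a_m$ is exactly through $I(\lambda_i+m)$, which vanishes, so every choice of $a_m$ gives the same coefficient $\mathcal R_m$.

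\textbf{Conclusion.} If $\mathcal R_m=0$, any $a_m$ gives an invariant $m$-jet. If $\mathcal R_m\neq0$, no choice does. This proves the equivalence.
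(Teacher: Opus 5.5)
Your proposal is correct and follows the paper's proof. Both use the residual identity $G=\mathscr L(w^{\lambda_i}f)/(w^{\lambda_i}f)$ with $f$ a unit to identify the $w^m$-coefficient of $G$ with $E_m$, then use $I(\lambda_i+m)=I(\lambda_j)=0$ to drop $a_m$ and leave $\mathcal R_m$; your explicit observation that $a_m$ (equivalently $s_m$) is the only freedom at order $m$, and that every choice gives the same coefficient $\mathcal R_m$, just spells out what the paper compresses into its appeal to the triangularity of the jet correspondence.
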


\begin{proof}
Since $f(0)=1$,
\[
\frac1f=1+O(w).
\]
If $E_0=\cdots=E_{m-1}=0$, then
\[
\mathscr Lu=w^{\lambda_i}(E_mw^m+O(w^{m+1})),
\]
and \eqref{eq:residual-identity} gives
\[
G=E_mw^m+O(w^{m+1}).
\]
At the resonant degree,
\[
I(\lambda_i+m)=I(\lambda_j)=0,
\]
so the coefficient $E_m$ contains no $a_m$ term and is precisely $\mathcal R_m$. Since the scalar-to-projective jet correspondence is triangular and formally invertible, vanishing of this coefficient is equivalent to extendability of the chosen projective jet through degree $m$.
\end{proof}

Thus the familiar arithmetic condition
\[
\lambda_j-\lambda_i\in\N
\]
appears simultaneously in Frobenius theory and in the projective dynamics, because $\lambda_j-\lambda_i$ is a characteristic exponent of the projective linearization.

\section{Two classical third-order models}
\label{sec:examples}

We illustrate the projective picture on two equations studied in \cite{LeonRodriguezScardua2021}.

\subsection{The third-order Bessel equation}

Consider the third-order Bessel equation
\begin{equation}
 w^3u'''+3w^2u''+wu'+(w^3-\alpha^3)u=0,
 \qquad \Re(\alpha)\geq0.
 \label{eq:bessel3}
\end{equation}
Here
\[
\beta(w)=3,
\qquad
\gamma(w)=1,
\qquad
\delta(w)=w^3-\alpha^3.
\]
Hence
\[
\mathcal A(w)=
\begin{pmatrix}
0&1&0\\
0&1&1\\
\alpha^3-w^3&-1&-1
\end{pmatrix}
\]
and
\[
I(\lambda)
=\lambda(\lambda-1)(\lambda-2)
+3\lambda(\lambda-1)
+\lambda-\alpha^3
=\lambda^3-\alpha^3.
\]
Let
\[
\omega=e^{2\pi i/3}.
\]
For $\alpha\neq0$ the three indicial roots are
\[
\lambda_1=\alpha,
\qquad
\lambda_2=\alpha\omega,
\qquad
\lambda_3=\alpha\omega^2.
\]
Therefore the special fiber contains the three singular points
\[
p_{\alpha},\qquad
p_{\alpha\omega},\qquad
p_{\alpha\omega^2},
\]
where
\[
p_\lambda=[1:\lambda:\lambda(\lambda-1)].
\]
At $p_\alpha$, for instance, the spectrum is
\[
\left\{1,\alpha(\omega-1),\alpha(\omega^2-1)\right\}.
\]

The case $\alpha=0$, called the third-order Bessel equation of degree zero in \cite{LeonRodriguezScardua2021}, is especially instructive. Then
\[
I(\lambda)=\lambda^3,
\]
so the three indicial roots coalesce at $0$. There is a unique projective singular point
\[
p_0=[1:0:0]
\]
on the special fiber, and both tangential eigenvalues vanish. Indeed, at $\alpha=0$ the residue is
\[
\mathcal A_0=\begin{pmatrix}0&1&0\\0&1&1\\0&-1&-1\end{pmatrix}.
\]
Its characteristic polynomial is $\lambda^3$ and its kernel is $\C(1,0,0)^t$; hence its geometric multiplicity is one and the residue has a single Jordan block of size three. Moreover, Theorem~\ref{thm:indicial-pfaffian} gives
\[
\mathcal O_{\mathscr S_F\cap E,p_0}
\simeq
\frac{\C\{S\}}{(S^3)}.
\]
Thus the three indicial directions coalesce into a non-reduced projective singularity of length three. This is the projective counterpart of the logarithmic hierarchy occurring in the resonant Frobenius description of the degree-zero model.

\subsection{The third-order Laguerre equation}

For the third-order Laguerre equation
\begin{equation}
 w^2u'''+3wu''+(1-w)u'+\alpha u=0.
 \label{eq:laguerre3}
\end{equation}
Multiplying by $w$ gives the Fuchs form
\[
w^3u'''+3w^2u''+w(1-w)u'+\alpha w u=0.
\]
Thus
\[
\beta(w)=3,
\qquad
\gamma(w)=1-w,
\qquad
\delta(w)=\alpha w.
\]
The indicial polynomial is
\[
I(\lambda)
=\lambda(\lambda-1)(\lambda-2)
+3\lambda(\lambda-1)+\lambda
=\lambda^3.
\]
Again the special fiber has a unique projective singularity
\[
p_0=[1:0:0],
\]
with a triple indicial root. Scheme-theoretically,
\[
\mathcal O_{\mathscr S_F\cap E,p_0}
\simeq
\frac{\C\{S\}}{(S^3)},
\]
so this point also has length three on the special singular fiber.

The scalar recurrence is particularly simple. If
\[
u=\sum_{n\geq0}a_nw^n,
\]
then
\[
(n+1)^3a_{n+1}+(\alpha-n)a_n=0,
\]
that is,
\[
\boxed{
a_{n+1}=\frac{n-\alpha}{(n+1)^3}a_n.}
\]
If $\alpha=N\in\N$, the recurrence stops at $n=N$, giving a polynomial solution. This is the third-order analogue of the polynomial phenomenon for the classical Laguerre equation discussed in \cite{LeonRodriguezScardua2021}.

For example, if $\alpha=1$, then
\[
u(w)=1-w
\]
is a solution of \eqref{eq:laguerre3}. The corresponding projective section is
\[
S=w\frac{u'}u=-\frac{w}{1-w},
\qquad
T=w^2\frac{u''}u=0.
\]
It is rational. This example will reappear below in connection with Liouvillian solutions.

\section{Projective holonomy and special solutions}
\label{sec:holonomy}

We finish with two consequences of the projective Riccati interpretation: the projective holonomy group and a simple class of Liouvillian solutions. We use here the standard relation between a linear differential system and its monodromy representation; for background on regular singular systems and monodromy, see, for instance, \cite{Wasow}.

Let
\[
L[u]=A(w)u'''+B(w)u''+C(w)u'+D(w)u
\]
be a third-order equation with polynomial coefficients, and let
\[
\sigma\subset\Pj^1
\]
be the singular set of the associated linear system, including the point at infinity when necessary. Put
\[
B=\Pj^1\setminus\sigma.
\]
Over $B$, we use the ordinary companion system and its projectivization. In this Riccati setting, projective parallel transport along loops in the base is the transverse holonomy of the projectivized foliation; hence the projectivized monodromy representation is precisely its projective holonomy representation. Near a regular singular point this projective system is gauge-equivalent, on the punctured neighborhood, to the Fuchs-adapted model of Section~\ref{sec:projective}. The equation defines a rank-three linear system and hence a monodromy representation
\[
\rho:\pi_1(B,w_0)\longrightarrow\GL(3,\C).
\]
Projectivizing, we obtain
\[
\overline\rho:\pi_1(B,w_0)\longrightarrow\PGL(3,\C).
\]

\begin{definition}
After choosing a fundamental system at the base point, the projective holonomy group of $L$ is
\[
\boxed{
\Hol_{\rm proj}(L)=\operatorname{Im}\overline\rho\subset\PGL(3,\C).}
\]
It is well defined up to conjugacy in $\PGL(3,\C)$.
\end{definition}

\begin{proposition}[Monodromy and transverse projective holonomy]
\label{prop:dual-holonomy}
Let $\Phi$ be a fundamental matrix of the ordinary companion system, normalized at the
base point, and write analytic continuation along $\gamma\in\pi_1(B,w_0)$ as
\[
\Phi^\gamma=\Phi M_\gamma.
\]
Choose the adjoint concomitant covectors so that their coefficient matrix is
$K=\Phi^{-1}$. Then the developing vector
\[
H=(H_1,H_2,H_3)^t
\]
satisfies
\[
H^\gamma=M_\gamma^{-1}H.
\]
Consequently the transverse projective continuation determined by the adjoint
concomitants is represented by $[M_\gamma^{-1}]$. Equivalently, after the usual
row/column identification of the dual system, it is the projective dual
(contragredient) monodromy of the original companion system.
\end{proposition}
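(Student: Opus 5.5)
The plan is to work entirely at the level of the linear companion system and let Lemma~\ref{lem:H-first-integral} do the invariance work. Write the companion system as $V'=\mathcal C(w)V$, with $V=(x,y,z)^t$, and let $\Phi$ be a fundamental matrix normalized at $w_0$. The developing vector has the form $H=K(w)V$, where $K$ is the $3\times3$ coefficient matrix $M$ of Theorem~\ref{thm:adjoint-pfaffian-full}, with rows $(A_i,B_i,C_i)$ determined by $\varphi_i$.

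\textbf{Step 1: $K=\Phi^{-1}$ is an admissible choice.} Lemma~\ref{lem:H-first-integral} says $\mathfrak X(H_i)=0$. For a linear function $H_i=K_iV$ this means $K_i'+K_i\mathcal C=0$, so $K'=-K\mathcal C$. This is exactly the equation satisfied by $\Phi^{-1}$, since $(\Phi^{-1})'=-\Phi^{-1}\Phi'\Phi^{-1}=-\Phi^{-1}\mathcal C$.

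Conversely, the row-reading of Lemma~\ref{lem:H-first-integral} applies to any solution of $K'=-K\mathcal C$:
\begin{itemize}
\item the $z$- and $y$-components of the row equation force the row to have the concomitant form $(\varphi''-a\varphi'+(b-a')\varphi,\ a\varphi-\varphi',\ \varphi)$;
\item the $x$-component is then the adjoint equation \eqref{eq:adjoint-third} for $\varphi$.
\end{itemize}
Hence each row of $\Phi^{-1}$ is the concomitant covector of an adjoint solution $\varphi_i$. These $\varphi_i$ form a fundamental system because $\det K=W^*$ by the factorization in Theorem~\ref{thm:adjoint-pfaffian-full}, and $\det\Phi^{-1}\neq0$. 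This justifies the normalization in the statement. I expect this to be the main point needing care: one must check that the map from adjoint solutions to concomitant covectors is a bijection onto the solutions of the dual system.

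\textbf{Step 2: continuation of $H$.} The coefficients of the dual system are single-valued on $B$, so analytic continuation along $\gamma$ of a solution of $K'=-K\mathcal C$ is again a solution. Since $\Phi^\gamma=\Phi M_\gamma$, uniqueness gives
\[
K^\gamma=(\Phi^\gamma)^{-1}=M_\gamma^{-1}\Phi^{-1}=M_\gamma^{-1}K.
\]
The fiber coordinates $V$ are global and are not continued. Therefore
\[
H^\gamma=K^\gamma V=M_\gamma^{-1}H.
\]

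\textbf{Step 3: projective statement and duality.} Passing to $[H_1:H_2:H_3]$ and using Corollary~\ref{cor:transverse-P2}, the transverse projective continuation is the class $[M_\gamma^{-1}]\in\PGL(3,\C)$.

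For the duality statement, transpose the dual system. $\Psi:=K^t$ satisfies $\Psi'=-\mathcal C^t\Psi$, the dual system in column form, and its monodromy is
\[
\Psi^\gamma=K^tM_\gamma^{-t}=\Psi\,(M_\gamma^{-1})^t .
\]
This is the contragredient representation $\gamma\mapsto (M_\gamma^{t})^{-1}$ of the original monodromy, up to the row/column identification. The only bookkeeping issue is the order of composition: $\gamma\mapsto M_\gamma^{-1}$ is an anti-homomorphism when $\Phi^\gamma=\Phi M_\gamma$ is read with right action. I would record this convention explicitly, noting that taking transposes converts it into the genuine contragredient homomorphism.
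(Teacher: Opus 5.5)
Your proof is correct and follows the paper's own argument: $K'=-K\mathcal C$, then $K^\gamma=(\Phi M_\gamma)^{-1}=M_\gamma^{-1}K$, hence $H^\gamma=K^\gamma V=M_\gamma^{-1}H$, and finally projectivize. Your Step~1 usefully spells out the converse that the paper compresses into ``compatible with the Lagrange pairing'', namely that every row of $\Phi^{-1}$ is the concomitant covector of an adjoint solution, and that these rows form a fundamental system since $\det K=W^*$.

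One small caution about your closing aside, which does not affect the statement being proved. With the usual loop product ($\gamma_1\gamma_2$ traverses $\gamma_1$ first) and $\Phi^\gamma=\Phi M_\gamma$, one gets $M_{\gamma_1\gamma_2}=M_{\gamma_2}M_{\gamma_1}$. So under that convention it is $\gamma\mapsto M_\gamma$ that is the anti-homomorphism and $\gamma\mapsto M_\gamma^{-1}$ that is a homomorphism; your version holds only under the opposite loop convention.
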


\begin{proof}
The rows of $K$ are adjoint covector solutions and satisfy
\[
K'=-KA.
\]
The normalization $K=\Phi^{-1}$ is therefore compatible with the Lagrange pairing.
After continuation,
\[
K^\gamma=(\Phi^\gamma)^{-1}
=(\Phi M_\gamma)^{-1}=M_\gamma^{-1}K.
\]
Since $H=Kv$ in companion coordinates, the same formula gives
$H^\gamma=M_\gamma^{-1}H$, and projectivization yields the assertion.
\end{proof}

This identifies explicitly the transverse projective structure of
Corollary~\ref{cor:transverse-P2} with the projective monodromy data of the scalar
equation (up to the dual/inverse convention).

This is the natural order-three analogue of the global holonomy group considered for second-order equations in \cite{LeonScardua2021}.

\begin{remark}[Elementary consequences of projective monodromy]
If $1\le |\sigma|=r<\infty$, then $\pi_1(\Pj^1\setminus\sigma)$ is free of rank $r-1$, so $\Hol_{\rm proj}(L)$ is generated by at most $r-1$ projective transformations. In particular it is trivial for $r=1$, cyclic for $r=2$, and generated by two elements for $r=3$. Moreover,
\[
\Hol_{\rm proj}(L)=\{1\}
\quad\Longleftrightarrow\quad
\rho(\pi_1(B,w_0))\subset\C^*I_3,
\]
because the kernel of $\GL(3,\C)\to\PGL(3,\C)$ is the subgroup of nonzero scalar matrices.
\end{remark}

We shall finally record one elementary consequence for special solutions. A nonzero solution is called \emph{hyperexponential over $\mathbb C(w)$} when $u'/u\in\mathbb C(w)$; such solutions form a particular class of Liouvillian solutions.

\begin{proposition}[Rational sections and hyperexponential solutions]
\label{prop:rational-hyperexp}
Assume that the coefficients are rational. The Fuchs-adapted projective Riccati system admits a rational invariant section $S,T\in\C(w)$ if and only if the scalar equation admits a nonzero hyperexponential solution
\[
u(w)=\exp\left(\int r(w)\,dw\right),\qquad r(w)\in\C(w).
\]
Indeed, a hyperexponential solution gives
\[
S=w\frac{u'}u=wr,\qquad
T=w^2\frac{u''}u=w^2(r'+r^2),
\]
which is a rational invariant section. Conversely, from a rational invariant section the first projective equation gives
\[
\frac{u'}u=\frac{S(w)}w\in\C(w),\qquad
u=C\exp\left(\int\frac{S(w)}w\,dw\right),
\]
and also $T=\theta S+S^2-S=w^2u''/u$; the second projective equation is then precisely the scalar equation divided by $u$.
\end{proposition}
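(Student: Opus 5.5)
The plan is to prove both directions by direct computation, using the correspondence of Theorem~\ref{thm:frobenius-section} with formal series replaced by rational functions. Throughout I use the projective system \eqref{eq:projective-system}: $\theta S=S+T-S^2$ and $\theta T=2T-ST-\beta T-\gamma S-\delta$, with $\beta,\gamma,\delta\in\C(w)$ because the coefficients are rational.

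For the forward direction, take a nonzero solution $u$ with $u'/u=r\in\C(w)$. Then $u''/u=r'+r^2$, so $S=wr$ and $T=w^2(r'+r^2)$ both lie in $\C(w)$. To verify invariance, I note that the computation in the proof of Theorem~\ref{thm:frobenius-section} is purely differential-algebraic. The identities $\theta u=Su$ and $T=\theta(\theta-1)u/u=\theta S+S^2-S$ give the first equation. Next, $\theta(\theta-1)(\theta-2)u/u=\theta T+ST-2T$, and dividing \eqref{eq:euler-form} by $u$ gives the second equation. These identities hold in the differential field generated by $u$ over $\C(w)$, and $u$ itself is never needed as a rational function.

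For the converse, take a rational invariant section $(S,T)$. I would first work on a simply connected open set $V\subset\C^*$ avoiding the poles of $S,T,\beta,\gamma,\delta$. On $V$, define $u=C\exp\int S/w\,dw$, which is holomorphic and nowhere zero, with $u'/u=S/w\in\C(w)$. Then $\theta u=Su$ holds by construction. The first projective equation gives $T=\theta S+S^2-S=\theta(\theta-1)u/u$, so $T=w^2u''/u$. Running the same identity backwards, the second projective equation becomes exactly $\mathscr L u/u=0$, by the residual identity \eqref{eq:residual-identity} read analytically rather than formally. Hence $u$ is a solution on $V$. By analytic continuation it remains a solution along any path, and the logarithmic derivative $u'/u=S/w$ stays a single-valued rational function. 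So $u$ is hyperexponential over $\C(w)$.

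The main obstacle is bookkeeping rather than depth. The residual identity was stated for formal Frobenius series $w^\lambda f$, so I must check that it uses only $\theta u=Su$ and the Leibniz rule. It does, so it transfers verbatim to any nonvanishing local analytic $u$. A secondary point is the normalization: the Fuchs form \eqref{eq:euler-form} requires dividing by $A(w)$. This is harmless over $\C(w)$, but I would state it explicitly so that $\beta,\gamma,\delta$ are rational and the equivalence is between solutions of the same scalar equation.
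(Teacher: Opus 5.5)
Your proposal is correct and follows the paper's own argument essentially verbatim. The forward direction computes $S=wr$ and $T=w^2(r'+r^2)$ and checks invariance through the differential-algebraic identities $\theta u=Su$, $T=\theta S+S^2-S$, $\theta(\theta-1)(\theta-2)u/u=\theta T+ST-2T$ from the proof of Theorem~\ref{thm:frobenius-section}. The converse defines $u$ by $u'/u=S/w$, uses the first projective equation to get $T=w^2u''/u$, and reads the second projective equation as the scalar equation divided by $u$. Your extra care only tightens the paper's terse phrasing: you work on a simply connected domain before continuing analytically, you note that $u'/u=S/w$ is a definition rather than a consequence of the first equation, and you make explicit the division by $A$ so that $\beta,\gamma,\delta\in\C(w)$.
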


For the Laguerre model \eqref{eq:laguerre3} with $\alpha=1$, $u=1-w$ gives the rational invariant section
\[
S=-\frac{w}{1-w},\qquad T=0.
\]

A complete characterization of Liouvillian solutions of third-order equations is naturally related to differential Galois theory and is substantially richer than the scalar Riccati analysis available in order two; see, for instance, \cite{vanDerPutSinger}. We do not pursue this problem here. Proposition~\ref{prop:rational-hyperexp} is enough to show that the projective Riccati system still detects an important and computable class of special solutions.

\section{Final remarks}

The constructions above place several classical features of a third-order equation in a single codimension-two picture.  The companion equation gives the Pfaffian form
\[
\Omega=i_{\mathfrak X}i_R(dx\wedge dy\wedge dz\wedge dw),
\]
the adjoint equation gives the same foliation through the Lagrange concomitants, and the Fuchs gauge selects a holomorphic logarithmic extension across a regular singular fiber.  In that extension the residue eigenscheme is represented, in scalar companion coordinates, by
\[
\bigl(T-S(S-1),I(S)\bigr),
\]
so indicial multiplicity becomes scheme length and resonance becomes an invariant-jet obstruction.  This is the point at which the scalar Frobenius and projective Pfaffian descriptions genuinely reinforce one another.

The same contraction principle suggests a higher-order construction with fiber \(\mathbb P^{n-1}\).  We leave that direction aside here in order to keep the paper centered on the first genuinely codimension-two case.



\begin{thebibliography}{99}

\bibitem{CerveauMattei}
D.~Cerveau and J.-F.~Mattei,
\emph{Formes int\'egrables holomorphes singuli\`eres},
Ast\'erisque, No.~97,
Soci\'et\'e Math\'ematique de France,
Paris, 1982.

\bibitem{LeonScardua2021}
V.~Le\'on and B.~Sc\'ardua,
A geometric-analytic study of linear differential equations of order two,
\emph{Electronic Research Archive}
\textbf{29} (2021), no.~2, 2101--2127.
\href{https://doi.org/10.3934/era.2020107}
{doi:10.3934/era.2020107}.

\bibitem{LeonRodriguezScardua2021}
V.~Le\'on, A.~Rodr\'iguez and B.~Sc\'ardua,
Analysis of third order linear analytic differential equations with a regular singularity: Bessel and other classical equations,
\emph{Annals of Mathematical Sciences and Applications}
\textbf{6} (2021), no.~1, 51--83.

\bibitem{AboEklundKahlePeterson2016}
H.~Abo, D.~Eklund, T.~Kahle and C.~Peterson,
Eigenschemes and the Jordan canonical form,
\emph{Linear Algebra Appl.} \textbf{496} (2016), 121--151.
\href{https://doi.org/10.1016/j.laa.2015.12.030}
{doi:10.1016/j.laa.2015.12.030}.

\bibitem{Szabo2008Extension}
Sz.~Szab'o,
The extension of a Fuchsian equation onto the projective line,
\emph{Acta Sci. Math. (Szeged)} \textbf{74} (2008), no.~3--4, 557--564.

\bibitem{Ivanics2020}
P.~Ivanics,
The locus of the representation of logarithmic connections by Fuchsian equations,
\emph{Period. Math. Hungar.} \textbf{81} (2020), 20--45.
\href{https://doi.org/10.1007/s10998-020-00313-6}
{doi:10.1007/s10998-020-00313-6}.

\bibitem{Szabo2012Nahm}
Sz.~Szab'o,
Nahm transform and parabolic minimal Laplace transform,
\emph{J. Geom. Phys.} \textbf{62} (2012), no.~11, 2241--2258.
\href{https://doi.org/10.1016/j.geomphys.2012.07.001}
{doi:10.1016/j.geomphys.2012.07.001}.

\bibitem{Ince}
E.~L.~Ince,
\emph{Ordinary Differential Equations},
Dover Publications, New York, 1956.

\bibitem{MatteiMoussu}
J.-F.~Mattei and R.~Moussu,
Holonomie et int\'egrales premi\`eres,
\emph{Ann. Sci. \'Ecole Norm. Sup. (4)}
\textbf{13} (1980), no.~4, 469--523.
\href{https://doi.org/10.24033/asens.1393}
{doi:10.24033/asens.1393}.

\bibitem{SantosScardua2010}
F.~Santos and B.~Sc\'ardua,
Construction of vector fields and Riccati foliations associated to groups of projective automorphisms,
\emph{Conformal Geometry and Dynamics}
\textbf{14} (2010), 154--166.
\href{https://doi.org/10.1090/S1088-4173-2010-00208-0}
{doi:10.1090/S1088-4173-2010-00208-0}.

\bibitem{vanDerPutSinger}
M.~van der Put and M.~F.~Singer,
\emph{Galois Theory of Linear Differential Equations},
Grundlehren der mathematischen Wissenschaften, Vol.~328,
Springer-Verlag, Berlin, 2003.

\bibitem{Wasow}
W.~Wasow,
\emph{Asymptotic Expansions for Ordinary Differential Equations},
Interscience Publishers, New York, 1965.

\end{thebibliography}
\end{document}